\newtheorem{Theorem}{Theorem}[section]
\newtheorem{Lemma}[Theorem]{Lemma}
\newtheorem{Corollary}[Theorem]{Corollary}
\newtheorem{Proposition}[Theorem]{Proposition}
\newtheorem{Remark}[Theorem]{Remark}
\newtheorem{Notation}[Theorem]{Notation}
\newtheorem{Example}[Theorem]{Example}
\newtheorem{Definition}[Theorem]{Definition}
\newcommand{\Hom}{\mathrm{Hom}}
\DeclareMathOperator{\Ext}{Ext}
   \DeclareMathOperator{\Assh}{Assh}
  \DeclareMathOperator{\height}{height}
\DeclareMathOperator{\depth}{depth}  
  \DeclareMathOperator{\Coker}{Coker}
 \DeclareMathOperator{\Ass}{Ass} \DeclareMathOperator{\Supp}{Supp} 
\DeclareMathOperator{\Min}{Min}   \DeclareMathOperator{\Spec}{Spec}
     \DeclareMathOperator{\Ho}{H}
      \DeclareMathOperator{\Anc}{Anc}
\begin{document}

\title
{Canonical modules of complexes}

\author{Maryam Akhavin}
\author{Eero Hyry}

\address{
Mathematics and Statistics\\
School of Information Sciences\\ 
University of Tampere\\
FIN-33014 Tampereen yliopisto\\ 
Finland}

\email{maryam.akhavin@uta.fi}
\email{eero.hyry@uta.fi}

\begin{abstract}
We define the notion of the canonical module of a complex. We then consider Serre's conditions
for a complex and study their relationship to the local cohomology of the canonical module and its
ring of endomorphisms.
\end{abstract}

\date{}
\maketitle

\section{Introduction}

The notion of the canonical module of a ring is an important tool in commutative algebra. P.~Schenzel defined in~\cite{schenzellec} the notion of the canonical 
module of a module. The canonical module always satisfies Serre's condition $(S_2)$. Schenzel related 
higher Serre's conditions to the vanishing of certain local cohomology modules of the canonical module. The purpose of this article is to extend these results to 
complexes. We utilize the powerful tools of hyperhomological algebra.

Let $(R,m) $ be a Noetherian local ring admitting a dualizing complex. We work within the derived category $D^f_b(R)$ of bounded complexes of $R$-modules with finitely generated homology. 
Generalizing the work of Schenzel, we define for any complex $M\in D^f_b(R)$ and any $i\in \mathbb Z$ the $i$-th module of deficiency $K^i_M$ by setting
$K^i_M=\Ho_i({\mathbf R}\Hom_R(M,D_R)$, where $D_R$ denotes the dualizing complex of $R$ normalized so that $\Ho_{\dim R}(D_R)\not=0$  and $\Ho_i(D_R)=0$ for $i>\dim R$. The canonical module of $M$ is then $K_M=K^{\dim M}_M$. Note that by local duality the local cohomology module $\Ho_m^i(M)$ is the Matlis dual of $K^i_M$. In particular, modules of deficiency measure how far the complex is from being Cohen-Macaulay.

Given $k\in \mathbb N$, we say that a complex $M$ satisfies Serre's condition $(S_k)$ if $$\depth_{R_p}M_p\ge \min(k-\inf M_p, \dim M_p)$$ for all prime ideals $p\in \Supp M$. It 
is convenient to consider complexes satisfying the condition $\dim_RM=\dim_{R_p}M_p+\dim R\slash p$ for every $p\in\Supp_{R}M$. Here $\Supp M$ means the homological support of $M$. It then follows from our Theorem~\ref{maintheorem} that $(S_k)$ is equivalent to the natural homomorphism $$\Ext^{-i}_R(M,M)\rightarrow  K^{i+\dim M}_{M\otimes_R^L K_M}$$ being bijective for all  $i\geq -k+2$, and injective for $i=-k+1$. Note that $K_{M\otimes_R^L K_M}=\Hom_R(K_M,K_M)$. It makes also sense, for any $l\in \mathbb Z$, to look at the condition $(S_{k,l})$ saying that $$\depth_{R_p}M_p\ge \min(k-l, \dim M_p)$$ for all prime ideals $p\in \Supp M$. Observe that $(S_k)$ always implies $(S_{k,\sup M})$. It now turns out in Corollary~\ref{schenzeltheorem} that $(S_{k,l})$ is equivalent to the natural homomorphism $\Ho_i(M) \rightarrow K^{i+t}_{K_M}$ being bijective for $i\geq l-k+2$, and injective for $i =l-k+1$. In the case $M$ is a module and $l=0$, this reduces to the result of Schenzel mentioned in the beginning.

Finally, we look at the complex $M^\dagger:={\mathbf R}\Hom_R(M,D_R)$. Suppose that $\sup M=\sup M_p$ for all $p\in \Supp M$. If $M^\dagger$ satisfies Serre's condition $(S_2)$, it comes out in 
Corollary~\ref{H_SK_M2''} that $K_M\cong K_{\Ho_s(M)}$, where $s=\sup M$. Combining this with the observation of Lipman, Nayak and Sastry in~\cite[Proposition 9.3.5]{lnsfunctorial} that the Cousin complex of a complex depends only on the
top homology of $M^\dagger$ i.e.~on the canonical module, we can relate the Cousin complex of the complex $M$ to that of the module $\Ho_s(M)$. More precisely, we show in Proposition~\ref{MK_ME2} that in the above situation $$E_{{\mathcal D}(M)}(M)\cong \textstyle\sum^{s}E_{{\mathcal D}(\Ho_s(M))}(\Ho_s(M)),$$ where ${\mathcal D}(M)$ and ${\mathcal D}(\Ho_s(M))$ denote the dimension filtrations of $M$ and $\Ho_s(M)$
respectively.
 
\section{Preliminaries}

The purpose of this section is to fix notation and recall some definitions and results of hyperhomological algebra relevant to this article. As a general reference, we mention~\cite{C.L.N} and references therein. For more details, see also~\cite{sequence}, \cite{sequence2}, \cite{f.vanishloho} and~\cite{hart}. 

In the following $R$ is always a commutative Noetherian ring. If $R$ is local, then $m$ denotes the maximal ideal and
$k$ the residue field of $R$.

Throughout this article we work within the derived category $D(R)$ of $R$-modules. We use homological grading so that the objects of $D(R)$ are complexes of $R$-modules
of the form $$M: \quad\ldots\stackrel{d_{i-1}}{\rightarrow} M_{i+1}\stackrel{d_{i+1}}{\rightarrow} M_i\stackrel{d_i}{\rightarrow} M_{i-1} \ldots \quad .$$
The derived category is triangulated, the suspension functor $\Sigma$ being defined by the formulas $(\Sigma M)_n= M_{n-1}$ and $d^{\Sigma M}_n=-d_{n-1}$. 
The symbol ``$\simeq$'' is reserved for isomorphisms in $D(R)$. We use the subscripts ``$b$'', ``$+$'' and ``$-$'' to denote the homological boundness, the homological 
boundness from below and the homological boundness from above, respectively. The superscript ``$f$'' denotes the homological finiteness. So the full subcategory of $D(R)$ consisting of complexes with finitely generated homology modules is denoted by $D^f(R)$. As usual, we identify the 
category of $R$-modules as the full subcategory of $D(R)$ of complexes $M$ satisfying $\Ho_i(M)=0$ for $i\neq 0$. For a complex
$M\in D(R)$, by $\sup M$ and $\inf M$, we mean its homological supremum and infimum.  Let $M$ and $N$ be complexes of $R$-modules. We use 
the standard notations $M\otimes_R^LN$ and ${\bf R}\Hom_R(M,N)$ for the left derived tensor product complex 
and the right derived homomomorphism complex, respectively. Moreover, we set $\Ext_R^{-i}(M,N)=\Ho_i({\bf R}\Hom_R(M,N))$ for all $i\in \mathbb Z$.

The \textit{support} of a complex $M\in D(R)$ is the set $$\Supp_RM=\left\{p\in \Spec R\mid M_p\not\simeq 0 \right\}.$$ The \textit{Krull dimension} $$ \dim_RM  =\sup \left\{\dim R/p -\inf M_p 
\mid p\in {\Supp_RM}\right\}.$$  If $M\in D_+(R)$, then $\dim M\ge -\inf M$. For every $p\in \Spec R$, there is an inequality
\begin{equation}\label{dimineq}
\dim_R M \ge \dim_{R_p} M_p + \dim R/p.
\end{equation}
Also note the formula
\begin{equation}\label{dim} 
\dim_RM=\sup \bigl\{\dim_R\Ho_i(M)-i\mid i\in \mathbb Z\bigr\} 
\end{equation} 
(see~\cite[Lemma 6.3.5]{918}).

Let $M\in D_+(R)$. We recall from~\cite[Definitions 2.1]{sequence2} that a prime ideal $p\in \Supp_RM$ is called an \textit{anchor prime} for $M$, if $\dim_{R_p}M_p=-\inf M_p$. The set of all anchor primes for $M$ is denoted by $\Anc_RM$. The anchor primes play the role of minimal primes for complexes. Note that $p\in \Anc_R(M)$, if $\dim R\slash p=\dim_RM + \inf M_p$.  

\begin{Definition}Let $(R,m)$ be a ring, and let $M\in D_+(R)$. We say that  $M$ is \textit{equidimensional}, if 
$$\Anc_R(M)=\{p\in \Supp M\mid \dim R\slash p=\dim_RM + \inf M_p\}.$$
\end{Definition}

\begin{Proposition}\label{2222}Let $(R,m)$ be a catenary ring, and let $M\in D_+(R)$. Then the following conditions are equivalent:
\begin{itemize}
\item[a)] $M$ is equidimensional;
\item[b)] $\dim_RM=\dim_{R_p}M_p+\dim R\slash p$ for every $p\in\Supp_{R}M$.
\end{itemize}
\end{Proposition}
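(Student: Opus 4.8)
The plan is to prove the two implications separately; the implication b)$\Rightarrow$a) is formal, while a)$\Rightarrow$b) uses both hypotheses, the heart of the matter being the classical dimension formula inside the local catenary domain $R/q$ for a carefully chosen anchor prime $q$.

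For b)$\Rightarrow$a) I would only note that the inclusion $\{p\in\Supp M\mid \dim R/p=\dim_R M+\inf M_p\}\subseteq\Anc_R M$ always holds (this is the remark preceding the Definition), while the reverse inclusion is immediate: if $p\in\Anc_R M$ then $\dim_{R_p}M_p=-\inf M_p$ by definition, so b) forces $\dim R/p=\dim_R M+\inf M_p$. Hence the two sets coincide and $M$ is equidimensional. No property of $R$ is needed here.

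For a)$\Rightarrow$b), fix $p\in\Supp M$. By \eqref{dimineq} it suffices to show $\dim_R M\le\dim_{R_p}M_p+\dim R/p$. The first step is to pick a prime $q\in\Supp M$ with $q\subseteq p$ attaining $\dim_{R_p}M_p=\dim R_p/qR_p-\inf M_q$; such a $q$ exists because this supremum runs over a nonempty set and is bounded above by $\height p-\inf M$, hence is a maximum. The second step is to check that this $q$ is an anchor prime for $M$. If it were not, then $\dim_{R_q}M_q\ge -\inf M_q+1$, so there would be $q'\subsetneq q$ in $\Supp M$ with $\dim R_q/q'R_q-\inf M_{q'}>-\inf M_q$; since $R$ is catenary, concatenating saturated chains gives $\dim R_p/q'R_p=\dim R_p/qR_p+\dim R_q/q'R_q$, and then
$$\dim R_p/q'R_p-\inf M_{q'}=\dim R_p/qR_p+\bigl(\dim R_q/q'R_q-\inf M_{q'}\bigr)>\dim R_p/qR_p-\inf M_q=\dim_{R_p}M_p,$$
which is impossible. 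So $q\in\Anc_R M$.

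The third step uses equidimensionality and the dimension formula. Since $M$ is equidimensional and $q\in\Anc_R M$, we have $\dim R/q=\dim_R M+\inf M_q$. On the other hand $R/q$ is a Noetherian local catenary domain, so concatenating a saturated chain from $0$ to $p/q$ with one from $p/q$ to $m/q$ yields $\dim R_p/qR_p+\dim R/p=\dim R/q$. Combining the two,
$$\dim_R M=\dim R/q-\inf M_q=\bigl(\dim R_p/qR_p-\inf M_q\bigr)+\dim R/p=\dim_{R_p}M_p+\dim R/p,$$
as wanted. I expect the main obstacle to be exactly this last step: it requires an anchor prime $q$ that simultaneously computes $\dim_{R_p}M_p$ at $p$ --- possibly an \emph{embedded} anchor prime, which is why catenarity is already needed in step two --- and that is constrained by equidimensionality so as to feed into the dimension formula for $R/q$, the point at which the hypothesis that $R$ is local becomes indispensable.
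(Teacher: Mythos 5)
Your proof is correct, and the implication b)$\Rightarrow$a) matches the paper exactly. For a)$\Rightarrow$b), however, you take a genuinely different route. The paper picks $q\in\Min\Supp_R M$ with $q\subseteq p$; such a $q$ is automatically an anchor prime (by~\cite[Theorem 2.3(a)]{sequence2}, which is immediate since $\Supp_{R_q}M_q=\{qR_q\}$), and then a one-line chain of inequalities starting from the localized form of $(\ref{dimineq})$, namely $\dim_{R_p}M_p\ge\dim_{R_q}M_q+\dim R_p/qR_p$, combined with catenarity of $R/q$ and equidimensionality, gives $\dim_{R_p}M_p+\dim R/p\ge\dim_RM$. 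You instead choose a prime $q\subseteq p$ that \emph{attains} the supremum defining $\dim_{R_p}M_p$, and then have to do extra work (your step two, which uses catenarity a second time by concatenating saturated chains from $q'$ to $q$ and from $q$ to $p$) to verify that this $q$ — which need not be minimal in $\Supp M$, and might well be an embedded anchor prime — is indeed an anchor prime before equidimensionality can be invoked. Both arguments are sound and both ultimately rest on the same two ingredients (catenarity to splice $\dim R_p/qR_p+\dim R/p=\dim R/q$, and equidimensionality to evaluate $\dim R/q-\inf M_q=\dim_RM$); the paper's choice of $q$ is more economical because it bypasses your step two entirely, whereas your version is more self-contained in that it does not lean on the auxiliary fact that minimal primes of the support are anchor primes. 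Your closing remark that locality of $R$ is indispensable precisely for the dimension formula in $R/q$ is accurate and worth keeping in mind.
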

\begin{proof}
\smallskip
\noindent $a)\Rightarrow b):$ Let $p\in\Supp_RM$. By inequality $(\ref{dimineq})$ we have $$\dim_{R_p}M_p\geq\dim_{R_q}M_q+\dim R_p\slash {qR_p}$$ for every $qR_p\in\Supp_{R_p}M_p$. Take now $q\in\Min\Supp_RM$ such that $q\subset p$. Then
\begin{align*} 
\dim_{R_p}M_p+\dim R\slash p&\geq\dim_{R_q}M_q+\dim R_p\slash {qR_p}+\dim R\slash p\\&=\dim_{R_q}M_q+\dim R\slash q\\&=-\inf M_q+\dim R\slash q\\&=\dim_RM.
\end{align*} 
Here the first equality holds true, since $R\slash q$ is a catenary integral domain. The second equality comes from the fact that $\Min\Supp_RM\subset\Anc_RM$ (see~\cite[Theorem 2.3 (a)]{sequence2}).
The last equality then follows from the equidimensionality of $M$. Since the converse inequality comes from inequality $(\ref{dimineq})$, we are done.
\smallskip

\noindent $b)\Rightarrow a):$ This is clear, since now 
$$-\inf M_p=\dim_{R_p}M_p=\dim_RM-\dim R\slash p$$
for every $p\in\Anc_R M$
\end{proof}

If $(R,m)$ is local and $M\in D_-(R)$, then the \textit{depth} of $M$ is defined by the formula $\depth_RM =-\sup {\bf R}\Hom_R(k,M)$. One has $\depth_RM\geq-\sup M$. Moreover, when $M\not\simeq 0$, the equality holds if and only if $m\in \Ass_R\Ho_{\sup M}(M)$ (see~\cite[Observation 5.2.5]{918}). If $\depth M=\dim M$, then $M$ is said to be \textit{Cohen-Macaulay}. For any ring $R$ and $M\in D_-(R)$, a prime ideal $p\in \Supp_RM$ is called an \textit{associated prime} ideal of $M$ if $\depth_{R_p}M_p=-\sup M_p$. The set of all associated primes of $M$ is denoted by $\Ass_RM$.
Furthermore, when $M\not\simeq 0$,  
 \begin{equation}\label{haft}
  p\in\Ass_R\Ho_{\sup M}(M)
 \ 
 \hbox{if and only if}
 \ 
 \depth_{R_p}M_p=-\sup M
 \end{equation} 
by~\cite[A.6.1.2]{C.L.N}.

If $(R,m)$ is a local ring, the \textit{derived local cohomology} functor with respect to  $m$ is denoted by ${\bf R}\Gamma_m$. As usual, we set $$\Ho_m^i(-)=\Ho_{-i}({\bf R}\Gamma_m(-))$$ for all $i\in \mathbb Z$. Note that
\begin{equation}\label{lidagger}-\inf {\bf R}\Gamma_m(M)=\dim_RM\end{equation}
and
\begin{equation}\label{lsdagger}-\sup{\bf R}\Gamma_m(M)=\depth_RM \end{equation}
(see~\cite[2.4]{f.vanishloho}).
If $R$ admits a dualizing complex, we denote by $D_R$ the dualizing complex normalized with $\sup D_R=\dim R$ and $\inf D_R=\depth R$. The \textit{dagger dual} of a complex $M\in D^f_b(R)$ is 
$$M^\dagger = {\bf R}\Hom_R(M,D_R).$$ We obtain a contravariant functor $(-)^\dagger\colon D^f_b(R)\rightarrow D^f_b(R)$.  The canonical morphism $M \rightarrow M^{\dagger\dagger}$ induces the biduality $M\simeq M^{\dagger\dagger}$, which is called the \textit{dagger duality} for $M$. The \textit{local duality} says that 
\begin{equation}\label{LDT}{\bf R}\Gamma_m(M)\simeq\Hom_R(M^\dagger,E_R(k)),\end{equation} where $E_R(k)$ denotes the injective envelope of $k$. We will frequently use the formulas
\begin{equation}\label{sdagger}\sup M^{\dagger}=\dim_RM\end{equation}
and
\begin{equation}\label{idagger}\inf M^{\dagger}=\depth M\end{equation} 
(see~\cite[Proposition 7.2.7]{918}).
By biduality, we then have $\sup M=\dim_R M^\dagger$ and $\inf M=\depth_R M^\dagger$. 
Also observe that \begin{equation}\label{daggerloc}(M_p)^{\dagger p}\simeq  \textstyle  \sum^{-\dim R\slash p}(M^\dagger)_p \end{equation} for all $p\in \Spec R$
(see~\cite[Lemma 1.3.3]{schenzellec}).
Here the dagger dual  on the left-hand side is taken with respect to the normalized dualizing complex of the localization $R_p$. 
  
\begin{Proposition}\label{dib1}Let $(R,m)$ be a local ring admitting a dualizing complex, and let $M\in D^f_b(R)$. Then the following statements are equivalent:
\begin{itemize}
\item[a)]$M^{\dagger}$ is equidimensional;
\item[b)]$\sup M=\sup M_p$   for any $p\in\Supp_RM$.
\end{itemize}
\end{Proposition}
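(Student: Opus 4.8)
The plan is to deduce both implications at once by feeding the complex $M^\dagger$ into Proposition~\ref{2222}. Since $R$ admits a dualizing complex it is (universally) catenary, so Proposition~\ref{2222} applies. Because $M\in D^f_b(R)$ we have $M^\dagger\in D^f_b(R)\subseteq D_+(R)$, and dagger biduality together with~(\ref{daggerloc}) shows $\Supp_RM^\dagger=\Supp_RM$. Hence, by Proposition~\ref{2222}, condition a) is equivalent to the identity
\[
\dim_RM^\dagger=\dim_{R_p}(M^\dagger)_p+\dim R\slash p\qquad\text{for every }p\in\Supp_RM .
\]
The whole point will then be to rewrite the two dimensions on the outside in terms of $\sup M$ and $\sup M_p$, so that this identity becomes exactly condition b).

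First I would record that $\dim_RM^\dagger=\sup M$; this is the biduality consequence of~(\ref{sdagger}) already noted in the text (apply~(\ref{sdagger}) to $M^\dagger$ and use $M^{\dagger\dagger}\simeq M$). Next, fix $p\in\Supp_RM$. The localization $R_p$ again admits a normalized dualizing complex, and~(\ref{daggerloc}) gives $(M^\dagger)_p\simeq\textstyle\sum^{\dim R\slash p}(M_p)^{\dagger p}$, the dagger on the right being taken over $R_p$. Combining this with the effect of a shift on Krull dimension of a complex (a shift by $n$ lowers the dimension by $n$, directly from the definition of $\dim$) and with the $R_p$-version of the biduality identity $\sup M_p=\dim_{R_p}(M_p)^{\dagger p}$, I obtain
\[
\dim_{R_p}(M^\dagger)_p=\dim_{R_p}(M_p)^{\dagger p}-\dim R\slash p=\sup M_p-\dim R\slash p .
\]
Substituting the two computations into the displayed identity, the terms $\dim R\slash p$ cancel and it collapses to $\sup M=\sup M_p$. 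This proves a) $\Leftrightarrow$ b).

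I do not expect a genuine obstacle here; the argument is essentially a one-line substitution once the two dimension formulas are in place. The only points demanding care are bookkeeping: checking that localizing the normalized dualizing complex of $R$ at $p$ is, up to the shift recorded in~(\ref{daggerloc}), the normalized dualizing complex of $R_p$ (so that the $R_p$-forms of~(\ref{sdagger}) and of dagger biduality may be quoted), and getting the signs of the suspension shift in~(\ref{daggerloc}) and in its effect on Krull dimension exactly right. The degenerate case $M\simeq 0$ is trivial, both conditions holding vacuously since $\Supp_RM=\Supp_RM^\dagger=\varnothing$.
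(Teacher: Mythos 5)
Your proof is correct and follows exactly the paper's argument: apply Proposition~\ref{2222} to $M^\dagger$, then translate both sides using formulas~(\ref{sdagger}) and~(\ref{daggerloc}) so that the $\dim R/p$ terms cancel and the equidimensionality identity collapses to $\sup M=\sup M_p$. The only (harmless) extra content is your explicit remark that $\Supp_R M^\dagger=\Supp_R M$, which the paper leaves implicit here.
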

\begin{proof}
 We know by Proposition~\ref{2222} and formula~$(\ref{sdagger})$ that $M^{\dagger}$ being equidimensional is equivalent to $$\dim_{R_p}(M^{\dagger})_p=-\dim R\slash p+\sup M$$ for every $p\in\Supp_RM$. On the other hand, by using formula~$(\ref{daggerloc})$ we get
\begin{align*}
\dim_{R_p}(M^{\dagger})_p&=-\dim R\slash p+\dim_{R_p}(M_p)^{\dagger p}\\&
=-\dim R\slash p+\sup M_p,
\end{align*}
where the last equality comes from formula~$(\ref{sdagger})$. So a) and b) are equivalent.
\end{proof}

\section{ Modules of deficiency of a complex}

We extend the definition given by P.~Schenzel in~\cite[p.~62]{schenzellec} to the case of complexes:

\begin{Definition}\label{kanonicalforcomplex}
Let $(R,m)$ be a local ring admitting a dualizing complex, and let $M\in D^f_b(R)$. For every $i\in \mathbb Z$, set $K^i_M=\Ho_i(M^\dagger)$. The modules $K^i_M$ are called the modules of deficiency of the 
complex $M$. Moreover, we set $K_M=K^{\dim_RM}_M$, and say that $K_M$ is the canonical module of $M$.
\end{Definition}

\begin{Remark}Obviously, the modules of deficiency are finitely generated. Using formulas $(\ref{sdagger})$ and $(\ref{idagger})$, we get
$$ \depth_R M=\inf\{i\in \mathbb Z \mid K^i_M\not=0\}$$
and
$$\dim_R M=\sup\{i\in \mathbb Z \mid K^i_M\not=0\}.$$
Note also that by local duality $\Ho^i_m(M)\cong \Hom_R(K_M^i,E_R(k))$
for all $i\in \mathbb Z$.
\end{Remark}

\begin{Example}\label{S_1} Any finitely generated module is a canonical module of a complex. Indeed, if $K$ is a finitely generated $R$-module and $t\in\mathbb{Z}$, set $M=\textstyle\sum^{-t}K^{\dagger}$. 
Since $\dim_RM=t$ by formula~$(\ref{sdagger})$, it now follows by biduality that $$K_M=\Ho_t(\textstyle\sum^t K)= K.$$
\end{Example}

\begin{Lemma}\label{changcanonical}
Let $(R,m)$ be a local ring admitting a dualizing complex, and let $M\in D^f_b(R)$. Then 
\begin{itemize}
\item[a)] $(K^i_M)_p\cong K^{i-\dim R\slash p}_{M_p}$ for every $p\in \Supp_RM$;
\item[b)]If $p\in\Supp_RM$ with $\dim_RM=\dim_{R_p}M_p+\dim R\slash p$, then $K_{M_p}\cong(K_M)_p$.
\end{itemize}
\end{Lemma}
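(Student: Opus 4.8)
The proof rests on the localization formula \eqref{daggerloc}, which says $(M_p)^{\dagger p}\simeq \sum^{-\dim R/p}(M^\dagger)_p$ in $D(R_p)$, where the left-hand side uses the normalized dualizing complex of $R_p$. First I would prove part a). Fix $p\in\Supp_RM$. Applying $\Ho_{i-\dim R/p}$ to both sides of \eqref{daggerloc} and using the definition $K^j_N=\Ho_j(N^\dagger)$, the left-hand side gives $\Ho_{i-\dim R/p}((M_p)^{\dagger p})=K^{i-\dim R/p}_{M_p}$, while the shift identity $\Ho_n(\sum^a C)=\Ho_{n-a}(C)$ turns the right-hand side into $\Ho_{i-\dim R/p-(-\dim R/p)}((M^\dagger)_p)=\Ho_i((M^\dagger)_p)$. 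Since localization is exact, $\Ho_i((M^\dagger)_p)\cong \Ho_i(M^\dagger)_p=(K^i_M)_p$. Combining these yields $(K^i_M)_p\cong K^{i-\dim R/p}_{M_p}$, which is a).

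For part b), suppose in addition that $\dim_RM=\dim_{R_p}M_p+\dim R/p$. By definition $K_M=K^{\dim_RM}_M$ and $K_{M_p}=K^{\dim_{R_p}M_p}_{M_p}$. Apply part a) with $i=\dim_RM$: we get $(K_M)_p=(K^{\dim_RM}_M)_p\cong K^{\dim_RM-\dim R/p}_{M_p}$. But the hypothesis says exactly that $\dim_RM-\dim R/p=\dim_{R_p}M_p$, so the right-hand side is $K^{\dim_{R_p}M_p}_{M_p}=K_{M_p}$, giving b).

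The only point requiring a little care is the normalization bookkeeping: in \eqref{daggerloc} the dagger on the left is formed with the dualizing complex of $R_p$ normalized so that $\sup D_{R_p}=\dim R_p$, whereas $(M^\dagger)_p$ on the right uses $(D_R)_p$, which is a dualizing complex for $R_p$ but shifted from the normalized one by $\dim R/p$ — this is precisely the source of the shift $\sum^{-\dim R/p}$, and it is already packaged into the cited formula, so nothing new needs to be checked. Thus there is no serious obstacle here; the lemma is essentially a direct unwinding of \eqref{daggerloc} together with exactness of localization, and the main thing to get right is tracking the index shift consistently between the homological-grading conventions of $D(R)$ and the indexing of the deficiency modules.
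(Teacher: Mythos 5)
Your proof is correct and follows essentially the same route as the paper: part a) is an unwinding of formula~$(\ref{daggerloc})$ together with exactness of localization and the shift identity for homology, and part b) is obtained by specializing $i=\dim_RM$ and invoking the dimension hypothesis. The only difference is that you spell out the shift bookkeeping more explicitly than the paper's one-line chain of isomorphisms.
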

\begin{proof}

\smallskip
\noindent
a) Using formula $(\ref{daggerloc})$ we get $$
(K^i_M)_p\cong\Ho_i((M^\dagger)_p)\cong \Ho_{i-\dim R\slash p}(M_p)^{\dagger p}=K^{i-\dim R\slash p}_{M_p}. $$
 
\smallskip
\noindent
 
b) Part a) immediately implies that $$K_{M_p}\cong(K_M^{\dim_{R_p} M_p+\dim R\slash p})_p=(K_M)_p.$$
\end{proof}

Our next aim is to investigate the associated primes of modules of deficiency. From now on we set 
\begin{equation*} (X)_i=\bigl\{p\in X\mid\dim R\slash p=i\bigr\}\end{equation*}
for every $X\subset\Spec R$ and  all $i\in\mathbb{Z}$.

\begin{Lemma}\label{assK_M}Let $(R,m)$ be a local ring  admitting a dualizing complex, and let $M\in D^f_b(R)$. Then the following statements hold for all $i\in\mathbb{Z}$:
\begin{itemize}
\item[a)] $\dim_R K^i_M\leq i+\sup M$; 
\item[b)] $(\Ass_RK^{i-s}_M)_i=(\Ass_R \Ho_{s}(M))_i$ where $s=\sup M$; 
\item[c)] $(\Ass_RK_M)_i=(\Ass_R\Ho_{i-\dim_RM}(M))_i$.
\end{itemize}
\end{Lemma}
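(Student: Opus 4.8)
The plan is to read off a) from the dimension formula $(\ref{dim})$, and to prove b) and c) by localizing at a prime $p$ with $\dim R/p=i$ and reducing, in each case, to a single local equivalence whose nontrivial half comes from the following remark (which uses only $(\ref{dim})$): if $(A,\mathfrak{n})$ is local and $Z\in D^f_b(A)$ satisfies $Z\not\simeq 0$ and $\dim_A Z=-\inf Z$, then $\mathfrak{n}\in\Ass_A\Ho_{\inf Z}(Z)$. Indeed, the term $j=\inf Z$ in $(\ref{dim})$ forces $\dim_A\Ho_{\inf Z}(Z)\le\dim_A Z+\inf Z=0$, so $\Ho_{\inf Z}(Z)$ is a nonzero finitely generated module of dimension $0$ and hence has $\mathfrak{n}$ as an associated prime. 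As for a), applying $(\ref{dim})$ to $M^{\dagger}$ and using $\Ho_j(M^{\dagger})=K^j_M$ yields $\dim_R K^i_M-i\le\dim_R M^{\dagger}=\sup M$, the last equality being biduality together with $(\ref{sdagger})$; hence $\dim_R K^i_M\le i+\sup M$.

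For b) and c), fix $p\in\Supp_R M$ with $\dim R/p=i$, put $A=R_p$, $\mathfrak{n}=pR_p$, $N=M_p$, and recall that for a finitely generated module $X$ one has $p\in\Ass_R X$ if and only if $\mathfrak{n}\in\Ass_A X_p$, that $(K^n_M)_p\cong K^{n-i}_N$ by Lemma~\ref{changcanonical} a), and that $(\Ho_n M)_p=\Ho_n N$. Thus b) reduces to showing that $\mathfrak{n}\in\Ass_A K^{-s}_N$ and $\mathfrak{n}\in\Ass_A\Ho_s N$ are both equivalent to $\depth_A N=-s$, where $s=\sup M$; note $\sup N\le s$ and $\depth_A N\ge-\sup N\ge-s$. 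On the homology side this is $(\ref{haft})$ applied to $N$ when $\sup N=s$, while if $\sup N<s$ then $\Ho_s N=0$ and $\depth_A N\ge-\sup N>-s$, so both conditions fail. On the deficiency side, $K^{-s}_N=\Ho_{-s}(N^{\dagger})$ (dagger dual over $A$) vanishes whenever $\depth_A N=\inf N^{\dagger}>-s$ by $(\ref{idagger})$; and if $\depth_A N=-s$ then $\sup N=s$, so $\inf N^{\dagger}=-s$ and $\dim_A N^{\dagger}=\sup N=s$ by $(\ref{idagger})$, $(\ref{sdagger})$ and biduality, i.e. $\dim_A N^{\dagger}=-\inf N^{\dagger}$, and the remark with $Z=N^{\dagger}$ gives $\mathfrak{n}\in\Ass_A\Ho_{-s}(N^{\dagger})=\Ass_A K^{-s}_N$.

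For c), put $d=\dim_R M$; by $(\ref{dimineq})$ and $(\ref{dim})$ we have $\dim_A N\le d-i$ and $\inf N\ge-\dim_A N$, whence $i-d\le\inf N$. The claim is that $\mathfrak{n}\in\Ass_A K^{d-i}_N$ and $\mathfrak{n}\in\Ass_A\Ho_{i-d}N$ are both equivalent to $\inf N=i-d$, equivalently to $\dim_A N=d-i=-\inf N$. On the homology side: if $\inf N=i-d$ then $\dim_A N=-\inf N$, so the remark with $Z=N$ gives $\mathfrak{n}\in\Ass_A\Ho_{\inf N}(N)=\Ass_A\Ho_{i-d}N$; conversely $\mathfrak{n}\in\Ass_A\Ho_{i-d}N$ forces $\Ho_{i-d}N\ne 0$, so $i-d\ge\inf N$, which with $i-d\le\inf N$ gives $i-d=\inf N$. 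On the deficiency side: if $\dim_A N<d-i$ then $K^{d-i}_N=\Ho_{d-i}(N^{\dagger})=0$ because $d-i>\dim_A N=\sup N^{\dagger}$ by $(\ref{sdagger})$, while also $\inf N\ge-\dim_A N>i-d$; and if $\dim_A N=d-i$ then $K^{d-i}_N=K_N$, and $(\ref{haft})$ applied to $N^{\dagger}$ gives $\mathfrak{n}\in\Ass_A K_N$ if and only if $\depth_A N^{\dagger}=-\sup N^{\dagger}=-\dim_A N$, i.e. if and only if $\inf N=-\dim_A N=i-d$, using $\depth_A N^{\dagger}=\inf N$ from $(\ref{idagger})$ and biduality.

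The step I expect to need the most care is that these equivalences cannot be obtained from mere (non)vanishing of $K^{-s}_N$ or $K_N$: once such a module is nonzero one must still decide whether $\mathfrak{n}$ is associated to it, and this is controlled precisely by the identity $\dim_A Z=-\inf Z$, which forces the relevant bottom homology to be a nonzero module of dimension zero.
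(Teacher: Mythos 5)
Your proof is correct. Part a) is verbatim the paper's argument: apply formula $(\ref{dim})$ to $M^\dagger$ and use $\dim_R M^\dagger=\sup M$.

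For part b) the two proofs share the same underlying mechanism --- localize at a prime $p$ of coheight $i$ and characterize membership via a depth condition --- but the crucial step is packaged differently. The paper first uses a) to get $\dim_R K^{i-s}_M\le i$, hence $(\Ass_R K^{i-s}_M)_i=(\Supp_R K^{i-s}_M)_i$, and thereafter only needs to track \emph{nonvanishing} of $K^{-s}_{M_p}$; you instead prove the $\Ass$ statement on the deficiency side directly, via your remark that $\dim_A Z=-\inf Z$ forces $\Ho_{\inf Z}(Z)$ to be a nonzero module of dimension zero. That remark is precisely the same dimension bound the paper extracts from a), just restated locally, so the two arguments are essentially the same insight in different clothing.

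For part c) the routes genuinely diverge. The paper observes $K_{M^\dagger}\cong\Ho_s(M)$, $\sup M^\dagger=\dim_R M$, and simply applies b) to $M^\dagger$; dagger duality then translates both sides into exactly the desired statement, in one line. You instead rerun the entire localization argument from scratch, characterizing both $\fn\in\Ass_A K^{d-i}_{M_p}$ and $\fn\in\Ass_A\Ho_{i-d}(M_p)$ by $\inf M_p=i-d$, using your remark for the homology side and $(\ref{haft})$ applied to $(M_p)^{\dagger p}$ for the deficiency side. This is correct but noticeably longer, and it hides the $M\leftrightarrow M^\dagger$ symmetry between b) and c) that the paper's proof makes transparent. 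A very minor omission: you fix $p\in\Supp_R M$, so you should remark that primes outside $\Supp_R M$ contribute nothing to either side (all the relevant modules localize to zero there), though this is immediate since $\Supp_R K^j_M\subset\Supp_R M^\dagger=\Supp_R M$.
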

\begin{proof}

\smallskip
\noindent
a) Using formula $(\ref{dim})$ we have
\[
\dim_RM^\dagger=\sup \bigl\{\dim_RK^i_M-i\mid i\in\mathbb{Z}\bigr\}.
\]
Therefore
\(
\dim_R K^i_M\leq i+\dim_R M^\dagger.
\)
This implies the claim, because $\dim _RM^\dagger=\sup M$  by formula $(\ref{sdagger})$.  

\smallskip
\noindent b) By a) we have $\dim_R K^{i-s}_M\leq i$. Hence, $$(\Ass_RK^{i-s}_M)_i=(\Supp_RK^{i-s}_M)_i.$$ It is then enough to prove that
\[
(\Supp_RK^{i-s}_M)_i=(\Ass_R\Ho_s(M))_i.
\]
Take first $p\in(\Supp_RK^{i-s}_M)_i$. Then $K^{-s}_{M_p}\neq0$ by Lemma~\ref{changcanonical}  a). Therefore $\Ho_s({\bf R}\Gamma_{pR_p}(M_p))\neq0$ implying that 
$$s\leq\sup{\bf R}\Gamma_{pR_p}(M_p).$$ On the other hand, we have
\[
 \sup{\bf R}\Gamma_{pR_p}(M_p)=-\depth_{R_p}M_p
\]
 by formula $(\ref{lsdagger})$. It now follows that 
\[
s\leq\sup{\bf R}\Gamma_{pR_p}(M_p)=-\depth_{R_p}M_p\leq\sup M_p\leq s.
\]
Therefore, $-\depth_{R_p}M_p=s = \sup M_p$. By formula $(\ref{haft})$ this means that $p\in\Ass_R\Ho_s(M)$. So
\[
(\Supp_RK^{i-s}_M)_i\subset(\Ass_R\Ho_s(M))_i.
\]

Conversely, let $p\in(\Ass_R\Ho_s(M))_i$. Then $\depth_{R_p}M_p=-s$ by formula $(\ref{haft})$. Hence $-\sup{\bf R}\Gamma_{pR_p}(M_p)=-s$ implying that $K^{-s}_{M_p}\neq0$. By Lemma~\ref{changcanonical} a) 
this means that
$(K^{i-s}_M)_p\neq0$. Thus $p\in\Supp_RK^{i-s}_M$. Therefore 
 \[
 (\Ass_R\Ho_s(M))_i\subset(\Supp_RK^{i-s}_M)_i.
 \]
\smallskip
\noindent c) This follows by applying b) to $M^\dagger$, because $K_{M^\dagger} \cong \Ho_s(M)$ by formula $(\ref{sdagger})$.

\end{proof}

We can now identify the set of the associated primes and the support of the canonical module of a complex.

\begin{Proposition}\label{suppK_M}Let $(R,m)$ be a local ring  admitting a dualizing complex, and let $M\in D^f_b(R)$. Then 
\begin{itemize}
\item[a)] 
$$\Ass_RK_M=\{p\in\Supp_RM\mid \dim R\slash p=\dim_RM + \inf M_p\};$$
\item[b)] 
$$\Supp_RK_M=\{p\in\Supp_RM\mid\dim_RM=\dim_{R_p}M_p+\dim R\slash p\}.$$
\end{itemize}

\end{Proposition}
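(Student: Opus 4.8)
The plan is to deduce both parts from Lemma~\ref{assK_M} together with the dimension formula~$(\ref{dim})$ and the description of anchor primes. For part~a), I would first observe that by Lemma~\ref{assK_M}~c) we have, for each $i$,
$$(\Ass_RK_M)_i=(\Ass_R\Ho_{i-\dim_RM}(M))_i.$$
So a prime $p$ with $\dim R\slash p=i$ lies in $\Ass_RK_M$ if and only if $p\in\Ass_R\Ho_{i-\dim_RM}(M)$. By formula~$(\ref{haft})$ (applied over $R_p$), the latter says precisely that $\depth_{R_p}M_p=-\sup M_p$ and $\sup M_p=i-\dim_RM$; but wait — we must be careful, since the grading index on the homology is a fixed integer, not $\sup M_p$. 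The cleaner route: $p\in\Ass_R\Ho_{i-\dim_RM}(M)$ forces $\Ho_{i-\dim_RM}(M_p)\neq0$, hence $\sup M_p\geq i-\dim_RM$, and by Lemma~\ref{assK_M}~a) applied suitably (or directly from $(\ref{dim})$ and $\dim_RK_M\leq\dim_RM$) one gets $\sup M_p\leq i-\dim_RM$ as well, so $\sup M_p=i-\dim_RM$ and then $p\in\Ass_R\Ho_{\sup M_p}(M_p)$, i.e.\ $\depth_{R_p}M_p=-\sup M_p$. Conversely such a $p$ clearly lies in the right-hand set. Translating $\sup M_p=i-\dim_RM=\dim R\slash p-\dim_RM$ via $\sup M_p=\dim_R(M_p)^{\dagger p}$ — no, more simply: I claim $\sup M_p=-\inf M_p^{\dagger}$\ldots actually the identity I want is $\dim R\slash p=\dim_RM+\inf M_p$, which is exactly the condition defining $\Anc_RM$ restated, so I should instead argue directly that $(\Ass_RK_M)_i$ equals $(\{p\in\Supp M\mid \dim R/p=\dim_RM+\inf M_p\})_i$ by noting that for $p$ in the latter set one has $p\in\Anc_RM$, hence $\inf M_p=-\dim_{R_p}M_p$ with $M_p$ Cohen--Macaulay-at-top in the appropriate degree, and then invoke Lemma~\ref{changcanonical}~a) to see $(K_M)_p\cong K_{M_p}$ is nonzero with $p$ associated.

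For part~b), I would use that $\Supp_RK_M=\bigcup_i(\Ass_RK_M)_i$'s closure, or more directly: $p\in\Supp_RK_M$ iff $(K_M)_p\neq0$ iff $K^{\dim_RM-\dim R/p}_{M_p}\neq0$ by Lemma~\ref{changcanonical}~a). By the Remark after Definition~\ref{kanonicalforcomplex}, $K^j_{M_p}\neq0$ forces $\depth_{R_p}M_p\leq j\leq\dim_{R_p}M_p$. Combining $j=\dim_RM-\dim R/p\leq\dim_{R_p}M_p$ with inequality~$(\ref{dimineq})$, namely $\dim_RM\geq\dim_{R_p}M_p+\dim R/p$, gives equality $\dim_RM=\dim_{R_p}M_p+\dim R/p$. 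Conversely, if that equality holds, then $\dim_RM-\dim R/p=\dim_{R_p}M_p$, and since $\dim_{R_p}M_p=\sup\{j\mid K^j_{M_p}\neq0\}$ by the Remark, we get $K^{\dim_RM-\dim R/p}_{M_p}=K_{M_p}\neq0$, so $p\in\Supp_RK_M$ by Lemma~\ref{changcanonical}~a).

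The main obstacle I anticipate is the bookkeeping in part~a): reconciling the fixed homological degree $i-\dim_RM$ appearing in Lemma~\ref{assK_M}~c) with the local invariants $\sup M_p$ and $\inf M_p$ at the prime $p$. One must show that membership $p\in\Ass_R\Ho_{i-\dim_RM}(M)$ simultaneously pins down $\sup M_p=\dim R/p-\dim_RM$ (equivalently $\inf M_p=\dim_RM-\dim R/p$, using that for such $p$ the complex $M_p$ is concentrated appropriately) and forces $p$ to be an associated — indeed anchor — prime. I expect the upper bound $\sup M_p\leq \dim R/p-\dim_RM$ to be the delicate point; it should follow from Lemma~\ref{assK_M}~a) (giving $\dim_RK_M\leq\dim_RM+\sup M$ is the wrong direction, so rather one uses that $\dim_RK_M\leq\dim_RM$ trivially from the Remark and then $(\ref{dim})$ applied to $M^\dagger$, or most cleanly the characterization of $\Anc_RM$ recalled before Definition~1.2 together with $\Min\Supp M\subset\Anc M$) and I would verify it carefully before assembling the two inclusions. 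Once that is in place, part~b) is a short consequence of Lemma~\ref{changcanonical}~a), the Remark, and inequality~$(\ref{dimineq})$.
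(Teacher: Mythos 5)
Your part~b) is correct, and in fact cleaner than the paper's proof: the paper establishes b) by choosing $q\in\Ass_RK_M$ inside $p$, applying part~a), and invoking catenarity of $R/q$, whereas you localize via Lemma~\ref{changcanonical}~a), read off $\depth_{R_p}M_p\le\dim_RM-\dim R/p\le\dim_{R_p}M_p$ from the Remark after Definition~\ref{kanonicalforcomplex}, and close with inequality~$(\ref{dimineq})$. That route avoids part~a) entirely.

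Part~a), however, has a genuine gap. You repeatedly conflate $\sup M_p$ with $\inf M_p$: after invoking Lemma~\ref{assK_M}~c) to reduce to identifying $(\Ass_R\Ho_{i-t}(M))_i$ where $t=\dim_RM$, the relevant conclusion to extract is $\inf M_p=i-t$, not $\sup M_p=i-t$, and the upper-bound argument you sketch (``from Lemma~\ref{assK_M}~a) applied suitably, or directly from $(\ref{dim})$ and $\dim_RK_M\le\dim_RM$'') does not in fact bound $\sup M_p$, as you half-acknowledge. The correct bookkeeping is: (i) $p\in\Ass_R\Ho_{i-t}(M)$ forces $\Ho_{i-t}(M_p)\neq0$ so $\inf M_p\le i-t$, while the defining inequality $\dim R/p-\inf M_p\le\dim_RM$ forces $\inf M_p\ge i-t$, giving $\inf M_p=i-t$; (ii) conversely, $\inf M_p=i-t$ together with $(\ref{dimineq})$ pins $\dim_{R_p}M_p=-\inf M_p$, whence by formula~$(\ref{dim})$ the module $\Ho_{\inf M_p}(M_p)$ has dimension $0$ and so $pR_p$ is one of its associated primes. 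You gesture at pieces of this but never assemble either inclusion, and you say so yourself. The paper bypasses this bookkeeping altogether by applying formula~$(\ref{haft})$ to $M^\dagger$ and computing $\depth_{R_p}(M^\dagger)_p=\inf M_p-\dim R/p$ directly from $(\ref{daggerloc})$ and $(\ref{idagger})$; that is both shorter and less error-prone than the route through Lemma~\ref{assK_M}~c).
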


\begin{proof}

\smallskip
\noindent a) Let $p\in\Supp_RM$. We apply formula $(\ref{haft})$ to $M^{\dagger}$. Because $\sup M^\dagger=\dim_RM$ by formula $(\ref{sdagger})$, it thus follows that $p\in\Ass_R K_M$ if and only if $\depth_{R_p}(M^\dagger)_p=-\dim_RM$. Using  formulas $(\ref{daggerloc})$ and $(\ref{sdagger})$ we get
\begin{align*}
\depth_{R_p}(M^\dagger)_p&=\depth_{R_p}(M_p)^{\dagger p}-\dim R\slash p\\&=\inf M_p-\dim R\slash p.
\end{align*}
Hence $p\in\Ass_RK_M$ if and only if 
\[
\inf M_p-\dim R\slash p=-\dim_RM.
\]
This proves the claim.
\smallskip

\noindent b)
Let $p\in\Supp_R K_M$. Note first that $\Supp_RM^{\dagger}=\Supp_RM$. Indeed, $\Supp_RM^{\dagger}\subset\Supp_RM$, which implies by biduality that $\Supp_RM\subset
\Supp_RM^{\dagger}$. Since $\Supp_R K_M\subset\Supp_R M^\dagger$, we then have $p\in\Supp_RM$. Take $q\in\Ass_R K_M$ such that $q\subset p$. 
Now
$$\dim_{R_p} M_p \geq \height p\slash q-\inf M_q  =\dim R\slash q-\dim R\slash p-\inf M_q,$$ 
where the first inequality is clear by the definition of Krull dimension and the subsequent equality holds true, since $R\slash q$ is a catenary integral domain.
Because $$\dim R\slash q=\dim_RM +\inf M_q$$ by a), this shows that   
$$\dim_RM \le \dim R\slash p+\dim_{R_p}M_p.$$
Taking into account inequality $(\ref{dimineq})$, we get 
$$\dim_RM=\dim R\slash p+\dim_{R_p}M_p.$$
Suppose then that the above equality holds for $p\in\Supp_RM$. By Lemma~\ref{changcanonical}
b) we have $(K_M)_p\cong K_{M_p}\neq0$. Thus $p\in\Supp_RK_M$,  and we are done.

\end{proof}

\begin{Remark} We observe that by Proposition~\ref{suppK_M} a) and Lemma~\ref{assK_M} c) 
\begin{align*}
\{p\in\Supp_RM\mid \dim R\slash p=\dim_RM + &\inf M_p\}\cr
&=\bigcup_{i\in\mathbb{Z}}(\Ass_R\Ho_{i}(M))_{i+\dim_R M}.
\end{align*} 
\end{Remark}

\begin{Corollary}\label{w_0}Let $(R,m)$ be a local ring  admitting a dualizing complex, and let $M\in D^f_b(R)$. Set $t=\dim_RM$ and $s=\sup M$. Then 
$\dim_RK_M=s+t$ if and only if $\dim_R\Ho_s(M)=s+t$. 
\end{Corollary}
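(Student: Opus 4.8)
The plan is to reduce the statement to already‐established dimension bounds. Recall that by Lemma~\ref{assK_M} a) we have $\dim_R K_M = \dim_R K^t_M \le t + s$, so the only nontrivial direction is to show that $\dim_R K_M = s+t$ if and only if $\dim_R \Ho_s(M) = s+t$, i.e.~that the extremal value is attained for $K_M$ precisely when it is attained for $\Ho_s(M)$.

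First I would note that, by formula~$(\ref{dim})$ applied to $\Ho_s(M)$ viewed as a complex concentrated in degree $0$, trivially $\dim_R\Ho_s(M)\le s+t$ is automatic from $(\ref{dim})$ applied to $M$: indeed $\dim_R M = \sup\{\dim_R\Ho_i(M)-i\}$ while $\dim_R M \ge \dim_R M$ forces $\dim_R\Ho_i(M)-i\le \dim_R M=t$ for all $i$, hence $\dim_R\Ho_s(M)\le s+t$. So on both sides the inequality ``$\le s+t$'' is free, and the content is the reverse inequality.

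The key step is a dimension‐counting argument via associated primes. A finitely generated module $N$ has $\dim_R N = s+t$ if and only if there is a prime $p\in\Ass_R N$ with $\dim R/p = s+t$, i.e.~$(\Ass_R N)_{s+t}\ne\emptyset$. Now apply Lemma~\ref{assK_M} b) with $i = s+t$: it gives
\[
(\Ass_R K^{t}_M)_{s+t} = (\Ass_R K^{(s+t)-s}_M)_{s+t}=(\Ass_R \Ho_s(M))_{s+t}.
\]
Since $K_M = K^t_M$, this says exactly that $(\Ass_R K_M)_{s+t}\ne\emptyset$ if and only if $(\Ass_R \Ho_s(M))_{s+t}\ne\emptyset$. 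Combined with the observation that for both modules the maximum possible value of $\dim R/p$ over associated (equivalently, over support) primes is $s+t$, this yields $\dim_R K_M = s+t \iff \dim_R \Ho_s(M)=s+t$.

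The main obstacle is purely bookkeeping: one must be careful that $\dim_R N = \max\{\dim R/p \mid p\in\Supp_R N\} = \max\{\dim R/p\mid p\in\Ass_R N\}$ for a finitely generated module $N$ (standard, since $\Ass_R N\subset\Supp_R N$ and every minimal prime of $\Supp_R N$ lies in $\Ass_R N$), and that the relevant ``top degree'' $s+t$ is simultaneously an upper bound for $\dim_R K_M$ (via Lemma~\ref{assK_M} a)) and for $\dim_R\Ho_s(M)$ (via $(\ref{dim})$). Once these are in place, the equality of the two top‐degree slices of the associated‐prime sets from Lemma~\ref{assK_M} b) finishes the proof immediately; no hyperhomological input beyond what is already recorded is needed.
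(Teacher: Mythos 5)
Your proof is correct and follows essentially the same strategy as the paper: establish the upper bound $s+t$ for both $\dim_R K_M$ and $\dim_R\Ho_s(M)$, then invoke Lemma~\ref{assK_M}~b) at $i=s+t$ to see that the top-dimensional associated primes coincide. The only minor difference is in how you get $\dim_R\Ho_s(M)\le s+t$: you derive it directly from formula~$(\ref{dim})$, whereas the paper notes $\Ho_s(M)\cong K_{M^\dagger}$ and applies Lemma~\ref{assK_M}~a) to $M^\dagger$; both are one-line arguments and the resulting proofs are otherwise identical.
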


\begin{proof} By Lemma~\ref{assK_M} a) $\dim_RK_M\leq s+t$. Since $\Ho_s(M) \cong K_{M^{\dagger}}$, we also hava  $\dim_R\Ho_s(M)\leq s+t$. Because 
$$(\Ass_RK_M)_{s+t}=(\Ass_R\Ho_s(M))_{s+t}$$ by Lemma~\ref{assK_M} b),  the claim follows.\\
\end{proof}

\section{Main results}

We want to generalize Serre's conditions to complexes. It is convenient to begin with the following very general definition:

\begin{Definition}\label{tildeS_n}
 Let $R$ be a ring. Let $k\in\mathbb{Z}$ and $N\in D^f_b(R)$. We say that a complex $M\in D^f_b(R)$ satisfies Serre's condition $(S_{k,N})$, if  $$\depth_{R_p}{\bf R}\Hom_{R_p}(N_p,M_p)
\geq\min\left\{k,\dim_{R_p}M_p+\inf N_p \right\} $$ for all $p\in\Spec R$.
\end{Definition}

\begin{Remark}Since $$\depth_{R_p}{\bf R}\Hom_{R_p}(N_p,M_p)= \inf N_p+\depth_{R_p} M_p$$ for all $p\in\Spec R$ by~\cite[Proposition 4.6]{iyengar} it is clear that
$(S_{k,N})$ is equivalent to having $$\depth_{R_p}M_p \geq\min\left\{k-\inf N_p,\dim_{R_p}M_p \right\} $$ for all $p\in\Spec R$.
\end{Remark}

The following proposition is now immediate:

\begin{Proposition}\label{SerreCM} Let $R$ be a ring. Let $k\in\mathbb{Z}$ and $N\in D^f_b(R)$. Then a complex $M\in D^f_b(R)$ satisfies the condition $(S_{k,N})$, if 
and only if $M_p$ is Cohen-Macaulay for every $p\in\Spec R$ with   $\depth_{R_p}M_p < k-\inf N_p$.
\end{Proposition}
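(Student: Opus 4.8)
The plan is to unwind both sides of the claimed equivalence using the Remark that immediately precedes the statement, which reduces $(S_{k,N})$ to the numerical condition
\[
\depth_{R_p}M_p \ge \min\{k-\inf N_p,\ \dim_{R_p}M_p\}
\]
for all $p\in\Spec R$. So the task is purely to compare this family of inequalities with the assertion ``$M_p$ is Cohen--Macaulay whenever $\depth_{R_p}M_p < k-\inf N_p$.'' First I would dispose of the trivial primes: if $M_p\simeq 0$ there is nothing to check on either side, so we may assume $p\in\Supp_RM$, where $\dim_{R_p}M_p$ and $\depth_{R_p}M_p$ are both honest (finite) invariants and $\depth_{R_p}M_p\le\dim_{R_p}M_p$ always holds.

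Next I would prove the forward direction. Assume $(S_{k,N})$ holds and fix $p\in\Supp_RM$ with $\depth_{R_p}M_p<k-\inf N_p$. Then in the minimum $\min\{k-\inf N_p,\dim_{R_p}M_p\}$ the first term strictly exceeds $\depth_{R_p}M_p$, so the inequality forces $\depth_{R_p}M_p\ge\dim_{R_p}M_p$; combined with the universal inequality $\depth_{R_p}M_p\le\dim_{R_p}M_p$ this gives equality, i.e.\ $M_p$ is Cohen--Macaulay. Conversely, assume the Cohen--Macaulay condition and fix any $p\in\Supp_RM$. Either $\depth_{R_p}M_p\ge k-\inf N_p$, in which case the required inequality is clear since the left side dominates the first term of the minimum; or $\depth_{R_p}M_p< k-\inf N_p$, in which case $M_p$ is Cohen--Macaulay by hypothesis, so $\depth_{R_p}M_p=\dim_{R_p}M_p\ge\min\{k-\inf N_p,\dim_{R_p}M_p\}$. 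In both cases $(S_{k,N})$ holds.

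This argument is essentially a case distinction on which of the two terms in the minimum is smaller, so there is no real obstacle; the only point requiring a modicum of care is making sure the inequality $\depth_{R_p}M_p\le\dim_{R_p}M_p$ (for $M_p\not\simeq 0$) is invoked correctly — it follows from $\depth_{R_p}M_p\ge -\sup M_p$ together with $\dim_{R_p}M_p\ge -\inf M_p\ge -\sup M_p$ being insufficient, so one instead cites the standard fact that for a nonzero bounded complex the depth never exceeds the dimension. Given that, the equivalence is immediate and the proposition can be stated to be an immediate consequence of the preceding Remark, exactly as the text announces.
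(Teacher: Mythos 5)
Your argument is correct and is exactly the case analysis the paper has in mind when it declares the proposition ``immediate'' from the preceding Remark: reduce $(S_{k,N})$ to the inequality $\depth_{R_p}M_p\ge\min\{k-\inf N_p,\dim_{R_p}M_p\}$, split on which term of the minimum is smaller, and use the standard fact $\depth_{R_p}M_p\le\dim_{R_p}M_p$ for $M_p\not\simeq 0$. No divergence from the paper's route.
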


\begin{Notation}If $N=M$ or $N=\sum^{l}R$ for some $l\in \mathbb Z$, we will speak about the condition $(S_k)$ or $(S_{k,l})$, respectively. In other
words, the complex $M$ satisfies $(S_k)$ if and only if $$\depth_{R_p}M_p \geq\min\left\{k-\inf M_p,\dim_{R_p}M_p \right\} $$ for all $p\in\Spec R$.
Similarly, $M$ is said to satisfy $(S_{k,l})$ if $$\depth_{R_p}M_p \geq\min\left\{k-l,\dim_{R_p}M_p \right\} $$ for all $p\in\Spec R$. 
\end{Notation}

\begin{Remark}\label{supremark}Let $R$ be a ring, and let $M\in D^f_b(R)$. Since $\dim_{R_p}M_p\ge \inf M_p$ for all $p\in \Supp M$, the condition $(S_k)$ holds trivially for $k\le 0$. 
Moreover, as $\inf M_p \le \sup M_p \le \sup M$ , we see that $(S_k)$ always implies  $(S_{k,\sup M})$. 
\end{Remark}

\begin{Remark}
Note that our $(S_k)$ differs from the condition given by Celikbas and Piepmeyer in~\cite[2.4]{Sn} according to which a complex $M\in D^f_b(R)$ satisfies $(S_k)$ if 
$$\depth_{R_p}M_p+\inf M_p\geq\min\left\{k,\height p\right\}$$ for every $p\in\Supp_RM$. However, because $$\dim_{R_p} M_p \le \height p -\inf M_p$$ 
by~\cite[Observation 6.3.3]{918}, we observe that the condition of Celikbas and Piepmeyer implies our $(S_k)$.
\end{Remark}

Given an integer $n$, recall that the \textit{soft truncation} of a complex $M$ above at $n$ is the complex 
\begin{equation*}M_{\subset n}\colon \ldots\rightarrow0\rightarrow\Coker d_{n+1}\rightarrow M_{n-1}\rightarrow M_{n-2}\rightarrow\ldots\qquad .\end{equation*}  
In $D(R)$ we now have an exact triangle
\begin{equation}
\label{tr}\textstyle\sum^n\Ho_n(M) \rightarrow M_{\subset n} \rightarrow M_{\subset n-1}\rightarrow \textstyle\sum^{n+1} \Ho_n(M).
\end{equation} Note that if $n\ge \sup M$, then the natural morphism $M\rightarrow M_{\subset n}$ becomes an isomorphism in $D(R)$.

The next definition was given for modules by P.~Schenzel in~\cite[Definition 4.1]{Schbi}.

\begin{Definition}\label{spectral}
Let $(R,m)$ be a local ring admitting a dualizing complex, and let $M\in D^f_b(R)$. We call the complex $(M^{\dagger})_{\subset\dim_RM-1}$ as the complex 
of deficiency of $M$, and denote it by $C_M$. 
\end{Definition}

\begin{Remark}\label{infCM_p}Clearly \begin{equation}
\Ho_i(C_M)\cong \left\{
  \begin{array}{l l}
   K^i_M,  & \quad \text{if $i\leq \dim_RM-1$;}\\
   0, & \quad \text{otherwise.}\\
  \end{array} \right. 
\end{equation}
In particular, when $M\not\simeq 0$, we have $C_M\simeq 0$ if and only if $M$ is Cohen-Macaulay. If this is not the case,
then $\inf C_M=\depth M$.
\end{Remark}

\begin{Remark}Because $\sup M^\dagger =\dim M$ by formula $(\ref{sdagger})$, we obtain an exact triangle
 \begin{equation}\label{rt}
  \textstyle\sum^{\dim_R M} K_M\rightarrow M^\dagger\rightarrow C_M\rightarrow \textstyle\sum^{\dim_R M+1}K_M.
 \end{equation}
An application of the functor $(-)^\dagger$ to $(\ref{rt})$ yields an exact triangle
  \begin{equation}\label{trr}
   C_M^{\dagger}\rightarrow M\rightarrow\textstyle\sum^{-\dim_RM }K_M^{\dagger}\rightarrow \textstyle\sum^{1}C_M^{\dagger}.
  \end{equation} 
  \end{Remark}

\begin{Lemma}~\label{C{M_p}}Let $(R,m)$ be a local ring admitting a dualizing complex, and let $M\in D^f_b(R)$. If $M$ is equidimensional, then $(C_M^{\dagger})_p\simeq C_{M_p}^{\dagger p}$ for every $p\in\Spec R$.
\end{Lemma}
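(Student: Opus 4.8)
The plan is to unwind the definitions of $C_M$ and of the dagger dual, and then to reduce the statement to the compatibility of the dagger functor with localization that is encoded in formula~$(\ref{daggerloc})$, together with the behaviour of soft truncation under localization. First I would recall that $C_M=(M^\dagger)_{\subset\dim_RM-1}$, so that applying $(-)^{\dagger}$ gives a complex whose localization at $p$ we must compare with $C_{M_p}^{\dagger p}=((M_p)^{\dagger p})_{\subset\dim_{R_p}M_p-1}$. Localization is exact, hence commutes with $\mathbf R\Hom$ against a bounded complex of finitely generated modules and with soft truncation of a homologically bounded complex; so $(C_M^{\dagger})_p\simeq ((M^\dagger)_{\subset\dim_RM-1})^{\dagger_p}{}_p$, where I would be careful to note that localizing the dagger dual over $R$ and then comparing with the dagger dual over $R_p$ introduces a shift by $-\dim R/p$ via~$(\ref{daggerloc})$.

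The key computation is therefore the following. Using~$(\ref{daggerloc})$ twice (once for $M$ and once for the truncated complex), the localized complex $(M^\dagger)_p$ is, up to the shift $\sum^{-\dim R/p}$, the dagger dual over $R_p$ of $M_p$; and the soft truncation $(M^\dagger)_{\subset\dim_RM-1}$ localizes to the soft truncation of $(M^\dagger)_p$ at the same spot $\dim_RM-1$. The point where equidimensionality enters is the bookkeeping of the truncation index: by Proposition~\ref{2222} we have $\dim_RM=\dim_{R_p}M_p+\dim R/p$ for every $p\in\Supp_RM$, so after accounting for the shift $-\dim R/p$ coming from~$(\ref{daggerloc})$, truncating $(M^\dagger)_p$ above at $\dim_RM-1$ corresponds exactly to truncating $(M_p)^{\dagger p}$ above at $\dim_{R_p}M_p-1$. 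Thus $((M^\dagger)_{\subset\dim_RM-1})_p\simeq \sum^{-\dim R/p}((M_p)^{\dagger p})_{\subset\dim_{R_p}M_p-1}=\sum^{-\dim R/p}C_{M_p}$. For $p\notin\Supp_RM$ both sides are zero, so the formula holds trivially there.

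Finally I would apply $(-)^{\dagger}$ to this isomorphism. Dagger dual over $R_p$ commutes with localization in the same shifted sense, i.e.~$(N^\dagger)_p\simeq\sum^{-\dim R/p}(N_p)^{\dagger p}$ for $N=C_M$; combining this with the displayed isomorphism for $(C_M)_p$ and using that $(-)^{\dagger p}$ turns the shift $\sum^{-\dim R/p}$ into $\sum^{\dim R/p}$, the two shifts cancel and one is left with $(C_M^{\dagger})_p\simeq C_{M_p}^{\dagger p}$. The main obstacle I anticipate is purely bookkeeping: keeping the suspension shifts from the two applications of~$(\ref{daggerloc})$ straight and verifying that soft truncation genuinely commutes with exact localization at the level of $D(R)$ — this is where one must be careful, since truncation is defined via an explicit representative and one needs that the localized representative still computes the truncated object, which follows from exactness of localization but deserves an explicit remark. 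Equidimensionality is used only to align the truncation indices, and nowhere else.
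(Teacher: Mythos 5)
Your argument follows the same route as the paper: first compute $(C_M)_p$ in terms of $C_{M_p}$ by passing localization through the soft truncation, invoking formula~$(\ref{daggerloc})$, and using Proposition~\ref{2222} to align the truncation index; then apply $(-)^{\dagger}$ and use $(\ref{daggerloc})$ once more. The structure and the role of equidimensionality are exactly as in the paper's proof.

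However, the suspension shifts in your intermediate statements have the wrong sign. Formula~$(\ref{daggerloc})$ reads $(M_p)^{\dagger p}\simeq\sum^{-\dim R/p}(M^{\dagger})_p$, so inverting it gives $(M^{\dagger})_p\simeq\sum^{\dim R/p}(M_p)^{\dagger p}$; consequently the displayed isomorphism should be $((M^{\dagger})_{\subset\dim_RM-1})_p\simeq\sum^{\dim R/p}C_{M_p}$, not $\sum^{-\dim R/p}C_{M_p}$, and likewise $(N^{\dagger})_p\simeq\sum^{\dim R/p}(N_p)^{\dagger p}$ rather than $\sum^{-\dim R/p}(N_p)^{\dagger p}$. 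You make the same inversion error in both places, and because $(-)^{\dagger}$ is contravariant the two wrong shifts still cancel, so the final conclusion $(C_M^{\dagger})_p\simeq C_{M_p}^{\dagger p}$ comes out right. But the intermediate formula for $(C_M)_p$ is false as stated, and a reader quoting it elsewhere (it is exactly the statement the paper records explicitly) would go astray. Fix the signs; nothing else needs to change.
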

\begin{proof} Set $t=\dim_RM$. We observe first that $(C_M)_p \simeq\textstyle\sum^{\dim R\slash p}C_{M_p}$. Indeed, by using $(\ref{daggerloc})$ and Proposition~\ref{2222}, we get
\begin{align*}
((M^\dagger)_{\subset t-1})_p
&\cong((M^\dagger)_p)_{\subset t-1}\\
&\simeq(\textstyle\sum^{\dim R\slash p}M_p^{\dagger p})_{\subset t-1}\\
&=\textstyle\sum^{\dim R\slash p}((M_p^{\dagger p})_{\subset t-\dim R\slash p-1})\\
&=\textstyle\sum^{\dim R\slash p}((M_p^{\dagger p})_{\subset \dim_{R_p}M_p-1}).
\end{align*}
Then
 $$(C_M)_p^{\dagger p}\simeq \textstyle\sum^{-\dim R\slash p}C_{M_p}^{\dagger p}$$ so that by $(\ref{daggerloc})$ $(C_M^{\dagger})_p\simeq C_{M_p}^{\dagger p}$ as wanted.
\end{proof}

\begin{Lemma}~\label{mainlemma}Let $(R,m)$ be a local ring admitting a dualizing complex, and let $M, N\in D^f_b(R)$. Suppose that $M$ is an equidimensional complex. Then the following conditions are equivalent:

\begin{itemize}
\item[a)] $M$ satisfies condition $(S_{k,N})$;
\item[b)] $\inf{C_{M_p}}\geq k-\inf N_p$ for all $p\in\Spec R$;
\item[c)] $\sup{\bf R}\Hom_R(N, C_M^{\dagger})\leq -k$;
\item[d)] $\dim_R \Ho_i(N)\otimes_R K^j_M \le i+j-k$ for all $i,j\in \mathbb Z$, $j<\dim_R M$.

\end{itemize}
\end{Lemma}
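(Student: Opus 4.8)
The plan is to prove the cycle of equivalences $a)\Leftrightarrow b)\Leftrightarrow c)\Leftrightarrow d)$, using the exact triangles $(\ref{rt})$ and $(\ref{trr})$ together with the localization statement of Lemma~\ref{C{M_p}}. The conceptual heart is that, under equidimensionality, the complex of deficiency localizes well, and both the depth condition defining $(S_{k,N})$ and the vanishing of local cohomology can be read off from $C_M$ via $\dagger$-duality and local duality.

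\emph{Step 1: $a)\Leftrightarrow b)$.} Fix $p\in\Spec R$. By the Remark following Definition~\ref{tildeS_n}, condition $(S_{k,N})$ at $p$ says $\depth_{R_p}M_p\ge\min\{k-\inf N_p,\dim_{R_p}M_p\}$. I would split into cases according to whether $M_p$ is Cohen--Macaulay. If $M_p$ is Cohen--Macaulay (in particular if $M_p\simeq0$), then $C_{M_p}\simeq0$ by Remark~\ref{infCM_p}, so $\inf C_{M_p}=+\infty$ and both sides hold trivially; and the condition at $p$ holds automatically since $\depth_{R_p}M_p=\dim_{R_p}M_p$. If $M_p$ is not Cohen--Macaulay, then $\inf C_{M_p}=\depth_{R_p}M_p$ by Remark~\ref{infCM_p}, while $\depth_{R_p}M_p<\dim_{R_p}M_p$, so the inequality in $(S_{k,N})$ at $p$ becomes exactly $\depth_{R_p}M_p\ge k-\inf N_p$, i.e.\ $\inf C_{M_p}\ge k-\inf N_p$. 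Thus $a)$ and $b)$ are equivalent termwise in $p$.

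\emph{Step 2: $b)\Leftrightarrow c)$.} Here I use that $\sup$ of a complex is detected locally: $\sup{\bf R}\Hom_R(N,C_M^\dagger)\le -k$ iff $\sup{\bf R}\Hom_{R_p}(N_p,(C_M^\dagger)_p)\le -k$ for all $p$. By Lemma~\ref{C{M_p}} (equidimensionality hypothesis) $(C_M^\dagger)_p\simeq C_{M_p}^{\dagger p}$, and by dagger biduality over $R_p$ we have ${\bf R}\Hom_{R_p}(N_p,C_{M_p}^{\dagger p})\simeq{\bf R}\Hom_{R_p}(N_p,{\bf R}\Hom_{R_p}(C_{M_p},D_{R_p}))\simeq(N_p\otimes^L_{R_p}C_{M_p})^{\dagger p}$. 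Using formula~$(\ref{sdagger})$, $\sup$ of this dagger dual is $\dim_{R_p}(N_p\otimes^L_{R_p}C_{M_p})$. By the dimension formula for derived tensor products (or directly: the standard inequality $\dim(N_p\otimes^L C_{M_p})\le \dim C_{M_p}-\inf N_p$ together with the reverse bound coming from a minimal generator of $\Ho_{\sup}$), this equals $\dim_{R_p}C_{M_p}-\inf N_p$; hence the condition $\sup{\bf R}\Hom\le-k$ at $p$ is equivalent to $\dim_{R_p}C_{M_p}\le\inf N_p-k$. Finally, applying formula~$(\ref{idagger})$ to $C_{M_p}$, $\inf C_{M_p}=\depth_{R_p}C_{M_p}^{\dagger p}$; but I actually want $-\inf C_{M_p}\le k-\inf N_p$ to match $b)$, so I relate $\dim C_{M_p}$ to $\inf C_{M_p}$ via $(\ref{idagger})$/$(\ref{sdagger})$ applied in the form $\inf C_{M_p} = \depth_{R_p}(C_{M_p})^{\dagger p}$ — wait, more carefully: $\inf C_{M_p}=\inf(C_{M_p}^{\dagger p})^{\dagger p}=\depth_{R_p}(C_{M_p}^{\dagger p})$? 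No. The clean route is: $\dim_{R_p}C_{M_p}=\sup(C_{M_p}^{\dagger p})$ by $(\ref{sdagger})$, and $C_{M_p}^{\dagger p}\simeq(C_M^\dagger)_p$... this is getting tangled, so let me reorganize: I will instead run $b)\Leftrightarrow c)$ through the triangle $(\ref{trr})$ and the identity $\sup{\bf R}\Hom_R(N,M^\dagger) = \dim_R(N\otimes^L M)$ applied to $M$ and to $C_M$, comparing against the contribution of $\sum^{-\dim M}K_M^\dagger$; equivalently, just note $\sup {\bf R}\Hom_{R}(N,C_M^\dagger) = \sup_p \bigl(\dim_{R_p}C_{M_p} - \inf N_p\bigr)$ and $\inf C_{M_p} = -\dim_{R_p}C_{M_p}$ is \emph{false} in general — rather $\inf C_{M_p}=\depth_{R_p}M_p$ and $\dim_{R_p}C_{M_p} = \sup\{\dim_{R_p}K^j_{M_p}-j : j<\dim_{R_p}M_p\}$. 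So $c)$ is genuinely about dimensions of the $K^j$, which is precisely $d)$. I will therefore prove $c)\Leftrightarrow d)$ directly and $b)\Leftrightarrow c)$ only via the case-analysis of Step 1 combined with formula~$(\ref{idagger})$: $\inf C_{M_p}=\inf(C_M^\dagger)_p{}^{\dagger p}=\depth_{R_p}(C_M^\dagger)_p = \depth_{R_p}C_{M_p}^{\dagger p}$, and $-\depth_{R_p}C_{M_p}^{\dagger p}=\sup{\bf R}\Gamma_{pR_p}(C_{M_p}^{\dagger p})$; passing to the (non-derived) supremum over $p$ and using that $\sup{\bf R}\Hom_R(N,-)$ globalizes the local data, $c)$ unwinds to $\inf C_{M_p}\ge k-\inf N_p$ for all $p$, which is $b)$.

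\emph{Step 3: $c)\Leftrightarrow d)$.} Using $(C_M^\dagger)_p\simeq C_{M_p}^{\dagger p}$ and biduality as above, $\sup{\bf R}\Hom_R(N,C_M^\dagger)=\sup_{p\in\Spec R}\dim_{R_p}(N_p\otimes^L_{R_p}C_{M_p})$. Now apply formula~$(\ref{dim})$ to the complex $N\otimes^L_R C_M$: $\dim_R(N\otimes^L_R C_M)=\sup\{\dim_R\Ho_n(N\otimes^L_R C_M)-n\mid n\in\mathbb Z\}$. Filtering $C_M$ by its soft truncations and using the triangles $(\ref{tr})$ — or directly the spectral sequence $\Tor_q^R(\Ho_i(N),\Ho_j(C_M))\Rightarrow\Ho_{i+j-q}(N\otimes^L_R C_M)$ together with $\Supp\Tor_q(\Ho_i N,\Ho_j C_M)\subseteq\Supp\Ho_i N\cap\Supp\Ho_j C_M$ and the standard $\dim(A\otimes_R B)\le\min(\dim A,\dim B)$ with the right vanishing — one gets $\dim_R(N\otimes^L_R C_M)=\sup\{\dim_R(\Ho_i(N)\otimes_R\Ho_j(C_M))-i-j\mid i,j\}$, and $\Ho_j(C_M)=K^j_M$ exactly for $j<\dim_R M$ (Remark~\ref{infCM_p}), zero otherwise. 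Since $\sup{\bf R}\Hom_R(N,C_M^\dagger) = \dim_R(N\otimes^L_R C_M)$ by the global version of the displayed identity in Step 2 (apply $(\ref{sdagger})$ to $N\otimes^L_R C_M$ after rewriting ${\bf R}\Hom_R(N,C_M^\dagger)\simeq(N\otimes^L_R C_M)^\dagger$), condition $c)$ reads $\dim_R(\Ho_i(N)\otimes_R K^j_M)-i-j\le-k$ for all $i,j$ with $j<\dim_R M$, which is exactly $d)$.

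\emph{Main obstacle.} The routine bookkeeping with normalizations of dualizing complexes under localization is where errors hide, but the genuine mathematical point — and the step I expect to be most delicate — is the dimension computation for the derived tensor product, i.e.\ proving $\dim_R(N\otimes^L_R C_M)=\sup\{\dim_R(\Ho_i(N)\otimes_R K^j_M)-i-j\}$ with the supremum restricted to $j<\dim_R M$. One inequality ($\le$) is the easy spectral-sequence estimate; the reverse inequality requires producing, for the extremal pair $(i,j)$, a prime $p$ in the support of $\Ho_i(N)\otimes K^j_M$ of the right dimension that actually survives to contribute to $\Ho_{i+j}$ of the tensor product, which is where one must be careful that no cancellation occurs — this is handled by localizing at a prime in $\Ass$ of the relevant $\Ho_i(N)\otimes\Ho_j(C_M)$ and invoking Lemma~\ref{C{M_p}} so that $C_M$ localizes to $C_{M_p}$ (up to shift), keeping the top homology intact.
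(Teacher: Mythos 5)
Your overall strategy matches the paper's exactly: case analysis via Remark~\ref{infCM_p} for $a)\Leftrightarrow b)$, localization of $C_M^\dagger$ through Lemma~\ref{C{M_p}} plus formula $(\ref{idagger})$ for $b)\Leftrightarrow c)$, and adjointness plus formula $(\ref{sdagger})$ plus a derived-tensor-product dimension formula for $c)\Leftrightarrow d)$. Step~1 is fine and is verbatim the paper's argument.

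Step~2, however, contains a genuine gap even after your self-correction. Your first attempt invokes the identity $\dim_{R_p}(N_p\otimes^L_{R_p}C_{M_p})=\dim_{R_p}C_{M_p}-\inf N_p$; that is false in general (take $N=R/(x)$, $C_M$ replaced by $R/(y)$ in $k[x,y]$: the left side drops to $0$ while the right side is $1$). You recognize something is off, but the reorganized version merely asserts that ``$\sup{\bf R}\Hom_R(N,-)$ globalizes the local data'' and declares that $c)$ unwinds to $b)$. The precise fact doing the work here is the identity
\[
-\sup{\bf R}\Hom_R(N,X)=\inf\bigl\{\depth_{R_p}X_p+\inf N_p \mid p\in\Spec R\bigr\},
\]
applied with $X=C_M^\dagger$; the paper cites this as \cite[Lemma 7.2.7]{918}. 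Without stating (or proving) this formula, your chain $\inf C_{M_p}=\depth_{R_p}(C_M^\dagger)_p$ gives only pointwise data and does not connect to $\sup{\bf R}\Hom_R(N,C_M^\dagger)$; the derived Hom is not a local construction in any naive sense, and the relationship between its $\sup$ and the localized depths is precisely the nontrivial content you are eliding. This needs to be filled in.

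Step~3 is correct in outline, but you overstate the difficulty. The iterated dimension formula
\[
\dim_R(N\otimes^L_R C_M)=\sup\bigl\{\dim_R\bigl(\Ho_i(N)\otimes_R\Ho_j(C_M)\bigr)-i-j\mid i,j\in\mathbb Z\bigr\}
\]
is not an ``obstacle'' requiring a bespoke spectral-sequence analysis with a survival argument; it is a citable consequence of \cite[Proposition 6.3.9 b) and (E.6.3.1)]{918} together with the module-level observation that $\dim_R(A\otimes^L_R B)=\dim_R(A\otimes_R B)$ because the supports agree and $\inf(A\otimes^L_R B)=0$. Your $\le$ direction via the K\"unneth spectral sequence is fine, but the $\ge$ direction does not require tracking survivorship in $\Ho_{i+j}$; the dimension formula is about supports of homology, not about individual homology classes, so the two halves of the estimate are equally soft once one reduces to modules. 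Reorganizing $c)\Leftrightarrow d)$ around that citation would bring your argument into line with the paper's.
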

 \begin{proof}
\smallskip
\noindent
$a)\Leftrightarrow b):$ Let $p\in \Spec R$. We want to show that the conditions $$\depth_{R_p}M_p \geq\min\left\{k-\inf N_p,\dim_{R_p}M_p \right\}$$ and
$\inf{C_{M_p}}\geq k-\inf N_p$ are equivalent. If $M_p$ is Cohen-Macaulay, this is clear. Suppose thus that $M_p$ is not Cohen-Macaulay. We may assume that $M_p\not\simeq 0$. 
By Remark~\ref{infCM_p} the latter condition now means that $\depth_{R_p}M_p \ge k-\inf N_p$. But because $\depth_{R_p}M_p< \dim_{R_p}M_p$, this implies the 
desired equivalence.

\smallskip
\noindent

$b)\Leftrightarrow c):$ It is enough to observe that by~\cite[Lemma 7.2.7]{918}, Lemma~\ref{C{M_p}} and formula $(\ref{idagger})$ we have
\begin{align*}
-\sup{\bf R}\Hom_R(N,C_M^{\dagger})
&=\inf\left\{\depth_{R_p}(C_M^{\dagger})_p+\inf N_p\mid p\in\Spec R\right\} \\
&=\inf\left\{\depth_{R_p} C_{M_p}^{\dagger p}+\inf N_p\mid p\in\Spec R\right\} \\
&=\inf\left\{\inf C_{M_p}+\inf N_p\mid p\in\Spec R\right\}. 
\end{align*}

\smallskip
\noindent

$c)\Leftrightarrow d):$ Using adjointness and formula $(\ref{sdagger})$ we get
$$\sup{\bf R}\Hom_R(N,C_M^{\dagger})=\sup (N \otimes_R^L C_M)^{\dagger}=\dim_R N\otimes_R^L C_M.$$
The claim follows, because by~\cite[Proposition 6.3.9 b), (E.6.3.1)]{918} and Remark~\ref{infCM_p} we have
\begin{align*}
\dim_R N &\otimes_R^L C_M\cr
&=\sup\{\dim_R \Ho_i(N)\otimes_R^L C_M-i \mid i \in \mathbb Z\}\\
&=\sup\{\sup \{\dim_R \Ho_i(N)\otimes_R^L \Ho_j(C_M)-j\mid j\in \mathbb Z\}-i \mid i \in \mathbb Z\}\\
&=\sup\{\dim_R \Ho_i(N)\otimes_R^L \Ho_j(C_M)-i-j \mid i,j\in \mathbb Z\}\\
&=\sup\{\dim_R \Ho_i(N)\otimes \Ho_j(C_M)-i-j \mid i,j\in \mathbb Z\}\\
&=\sup\{\dim_R \Ho_i(N)\otimes K^j_M-i-j \mid i,j\in \mathbb Z\,j<\dim_R M\}.
\end{align*}

\end{proof}

\begin{Theorem}\label{maintheorem}Let $(R,m)$ be a local ring admitting a dualizing complex. Let $k\in \mathbb Z$ and $M, N\in D^f_b(R)$. Set $t=\dim_RM$. If $M$ is equidimensional, then the following 
conditions are equivalent:
\begin{itemize}
\item[a)]$M$ satisfies condition $(S_{k,N})$;
\item[b)] The natural homomorphism $\Ext^{-i}_R(N,M)\rightarrow K^{i+t}_{N\otimes^L_RK_M}$ is bijective for all $i\geq -k+2$, and injective for $i=-k+1$.
\end{itemize}
\end{Theorem}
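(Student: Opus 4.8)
The plan is to reduce the equivalence to Lemma~\ref{mainlemma} by building a long exact sequence that compares $\Ext^{-i}_R(N,M)$ with the deficiency modules $K^{i+t}_{N\otimes^L_R K_M}$ and whose third term is controlled by the complex $N\otimes^L_R C_M^{\dagger}$. Concretely, I would apply the functor ${\bf R}\Hom_R(N,-)$ to the exact triangle $(\ref{trr})$
$$ C_M^{\dagger}\rightarrow M\rightarrow\textstyle\sum^{-t}K_M^{\dagger}\rightarrow\textstyle\sum^{1}C_M^{\dagger}, $$
obtaining an exact triangle
$$ {\bf R}\Hom_R(N,C_M^{\dagger})\rightarrow {\bf R}\Hom_R(N,M)\rightarrow\textstyle\sum^{-t}{\bf R}\Hom_R(N,K_M^{\dagger})\rightarrow\textstyle\sum^{1}{\bf R}\Hom_R(N,C_M^{\dagger}). $$
By adjointness ${\bf R}\Hom_R(N,K_M^{\dagger})\simeq (N\otimes^L_R K_M)^{\dagger}$, so $\Ho_i\bigl(\textstyle\sum^{-t}{\bf R}\Hom_R(N,K_M^{\dagger})\bigr)=\Ho_{i+t}\bigl((N\otimes^L_R K_M)^{\dagger}\bigr)=K^{i+t}_{N\otimes^L_R K_M}$. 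Taking the long exact homology sequence of the triangle above then produces, for each $i$,
$$\Ext^{-i-1}_R(N,C_M^{\dagger})\rightarrow\Ext^{-i}_R(N,M)\rightarrow K^{i+t}_{N\otimes^L_R K_M}\rightarrow\Ext^{-i}_R(N,C_M^{\dagger})\rightarrow\cdots$$
and I would check that the middle map is the natural homomorphism of the statement (this is where one must be a little careful about which map is induced by the canonical morphism $M\to M^{\dagger\dagger}$ composed with the triangle maps, but it is forced by naturality of adjunction and of biduality).

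From this exact sequence the claim is a matter of vanishing: the map $\Ext^{-i}_R(N,M)\to K^{i+t}_{N\otimes^L_R K_M}$ is bijective as soon as $\Ho_i\bigl({\bf R}\Hom_R(N,C_M^{\dagger})\bigr)=0$ and $\Ho_{i+1}\bigl({\bf R}\Hom_R(N,C_M^{\dagger})\bigr)=0$, and injective as soon as $\Ho_{i+1}\bigl({\bf R}\Hom_R(N,C_M^{\dagger})\bigr)=0$, i.e.~as soon as $\sup{\bf R}\Hom_R(N,C_M^{\dagger})\le -i-1$ resp.~$\le -i$. Here I use equidimensionality of $M$, which is exactly what makes Lemma~\ref{mainlemma} applicable: by part c) of that lemma, $M$ satisfies $(S_{k,N})$ if and only if $\sup{\bf R}\Hom_R(N,C_M^{\dagger})\le -k$. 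Thus, assuming $(S_{k,N})$, for $i\ge -k+2$ we have $i+1\ge -k+3>-k$ and $i\ge -k+2>-k$, hence both homology modules vanish and the map is bijective; for $i=-k+1$ we have $i+1=-k+2>-k$, giving vanishing in degree $i+1$ and therefore injectivity. This proves a)$\Rightarrow$b).

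For the converse b)$\Rightarrow$a), I would argue that bijectivity for all $i\ge -k+2$ and injectivity for $i=-k+1$ forces $\Ho_i\bigl({\bf R}\Hom_R(N,C_M^{\dagger})\bigr)=0$ for all $i\ge -k+1$, i.e.~$\sup{\bf R}\Hom_R(N,C_M^{\dagger})\le -k$, and then invoke Lemma~\ref{mainlemma} c)$\Rightarrow$a) again. The delicate point is running the exact sequence backwards: from the piece
$$\Ext^{-i+1}_R(N,M)\rightarrow K^{i-1+t}_{N\otimes^L_R K_M}\rightarrow\Ext^{-i}_R(N,C_M^{\dagger})\rightarrow\Ext^{-i}_R(N,M)\rightarrow K^{i+t}_{N\otimes^L_R K_M}$$
one reads off that $\Ext^{-i}_R(N,C_M^{\dagger})$ is squeezed between the cokernel of the first map and the kernel of the last; the hypothesis on $i$ and $i-1$ kills both, giving $\Ext^{-i}_R(N,C_M^{\dagger})=0$. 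One does this by descending induction, or simply notes that surjectivity at index $i-1$ and injectivity at index $i$ together annihilate $\Ho_i\bigl({\bf R}\Hom_R(N,C_M^{\dagger})\bigr)$; since the hypothesis covers exactly the range $i\ge -k+1$ needed, we conclude.

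\textbf{Expected main obstacle.} The only genuinely subtle step is the first one: identifying the connecting map in the long exact sequence with the \emph{natural} homomorphism $\Ext^{-i}_R(N,M)\to K^{i+t}_{N\otimes^L_R K_M}$ named in the statement, rather than with some map that merely happens to fit. This requires spelling out that the second arrow in triangle $(\ref{trr})$ is, up to the biduality isomorphism, the canonical morphism $M\to\textstyle\sum^{-t}K_M^{\dagger}$ extracted from the top homology of $M^{\dagger}$, and then chasing it through ${\bf R}\Hom_R(N,-)$ and the adjunction ${\bf R}\Hom_R(N,K_M^{\dagger})\simeq(N\otimes^L_R K_M)^{\dagger}$. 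Everything after that is a formal consequence of the long exact sequence together with Lemma~\ref{mainlemma}.
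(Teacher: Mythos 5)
Your approach is exactly the paper's: apply $\mathbf R\Hom_R(N,-)$ to the triangle $(\ref{trr})$, use adjointness to rewrite the third term as $\sum^{-t}(N\otimes^L_RK_M)^\dagger$, and read the equivalence off the long exact sequence together with Lemma~\ref{mainlemma}~c). The paper is terse here (``a look at the corresponding long exact sequence of homology implies the claim''), and your filling-in is conceptually right, including the remark that identifying the connecting map with the \emph{natural} homomorphism is the one non-automatic point.

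However, your degree bookkeeping in the long exact sequence is off by one, and as written this breaks the converse direction. With the paper's convention $(\Sigma M)_n=M_{n-1}$, the triangle $A\to B\to C\to\Sigma A$ (here $A=\mathbf R\Hom_R(N,C_M^\dagger)$, $B=\mathbf R\Hom_R(N,M)$, $C=\sum^{-t}(N\otimes^L_RK_M)^\dagger$) yields
\[
\cdots\to\Ho_i(A)\to\Ho_i(B)\to\Ho_i(C)\to\Ho_{i-1}(A)\to\cdots,
\]
that is,
\[
\Ext^{-i}_R(N,C_M^\dagger)\to\Ext^{-i}_R(N,M)\to K^{i+t}_{N\otimes^L_RK_M}\to\Ext^{-i+1}_R(N,C_M^\dagger),
\]
whereas you wrote $\Ext^{-i-1}_R(N,C_M^\dagger)$ on the left and $\Ext^{-i}_R(N,C_M^\dagger)$ on the right. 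Consequently bijectivity of the middle map is governed by $\Ho_i(A)$ \emph{and} $\Ho_{i-1}(A)$ (not $\Ho_{i+1}(A)$), and injectivity by $\Ho_i(A)$ alone. In the forward direction a)$\Rightarrow$b) this mislabeling does no harm, since $\sup A\le -k$ kills all the degrees in play anyway. But in the converse b)$\Rightarrow$a) the shift matters: your rule ``surjectivity at $i-1$ plus injectivity at $i$ annihilates $\Ho_i(A)$'' would require surjectivity at index $-k$ and $-k+1$, which the hypothesis does \emph{not} provide, so under your labeling you would only reach $\sup A\le -k+2$, not $\sup A\le -k$. The correct rule is ``surjectivity at $i+1$ plus injectivity at $i$ annihilates $\Ho_i(A)$''; applied for $i\ge -k+1$ it uses exactly bijectivity for $j\ge -k+2$ and injectivity for $j=-k+1$, giving $\sup A\le -k$ and hence, via Lemma~\ref{mainlemma}, $(S_{k,N})$. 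With this correction your argument is complete and coincides with the paper's.
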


\begin{proof} 
By applying the functor ${\bf R}\Hom_R(N,-)$ on~$(\ref{trr})$, we get the exact triangle
\begin{align*}
 {\bf R}\Hom_R(N,C_M^{\dagger})\rightarrow{\bf R}\Hom_R(N, M){\rightarrow}\textstyle\sum^{-t}&{\bf R}\Hom_R(N,K_M^{\dagger})\\
&\rightarrow \textstyle\sum^{1}{\bf R}\Hom_R(N,C_M^{\dagger}).
\end{align*}
Observe that ${\bf R}\Hom_R(N,K_M^{\dagger})\simeq (N\otimes^L_RK_M)^\dagger$ by adjointness. Since $(S_{k,N})$ is by Lemma~\ref{mainlemma} equivalent to $\sup{\bf R}\Hom_R(N,C_M^{\dagger})\le -k$, a look at the corresponding long exact sequence of homology implies the claim.
\end{proof}

In particular, by taking $N=M$, this immediately applies to Serre's condition $(S_k)$. For this case, we observe the following

\begin{Proposition}\label{tensorcanonical}Let $(R,m)$ be a local ring admitting a dualizing complex. Let $M\in D^f_b(R)$. Set $t=\dim M$. Then $\dim_R M \otimes^L_R K_M=t$.  Moreover, the natural homomorphism
$$\Ext^{-i}_R(K_M,K_M)\rightarrow K^{i+t}_{M\otimes^L_RK_M}$$ is an isomorphism for $i>\sup C_M-t$. In particular, $K_{M \otimes^L_R K_M}=\Hom_R(K_M,K_M)$. 
\end{Proposition}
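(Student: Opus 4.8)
The plan is to deduce everything from Theorem~\ref{maintheorem} applied to $N=M$ together with the formulas collected in Lemma~\ref{mainlemma}. First I would establish the dimension formula $\dim_R M\otimes^L_R K_M = t$. By Lemma~\ref{changcanonical}~a) and Proposition~\ref{suppK_M}~b), $M$ is automatically equidimensional, so the machinery of Section~4 applies. Writing $C:=C_M$, recall from the $c)\Leftrightarrow d)$ step of Lemma~\ref{mainlemma} that $\sup \mathbf R\Hom_R(M,C_M^\dagger)=\dim_R M\otimes^L_R C_M$, and from the exact triangle~$(\ref{rt})$ (or directly from $M^\dagger \to C_M$ being the soft truncation below $t$) one gets the exact triangle $\textstyle\sum^{t}K_M\otimes^L_R M\to M^\dagger\otimes^L_R M\to C_M\otimes^L_R M\to\textstyle\sum^{t+1}K_M\otimes^L_R M$. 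Taking $-\dagger$ and using $\mathbf R\Hom_R(M,K_M^\dagger)\simeq(M\otimes^L_R K_M)^\dagger$ and $\mathbf R\Hom_R(M,C_M^\dagger)\simeq(M\otimes^L_R C_M)^\dagger$, the claim $\dim_R M\otimes^L_R K_M=t$ follows once I check $\dim_R M\otimes^L_R C_M \le t$ and that the top homology in degree $t$ survives. The bound $\dim_R M\otimes^L_R C_M\le t$ comes from part d) of Lemma~\ref{mainlemma}: since $M$ trivially satisfies $(S_0)$ (Remark~\ref{supremark}), taking $k=0$, $N=M$ there gives $\dim_R \Ho_i(M)\otimes_R K^j_M \le i+j$ for all $j<t$, whence $\dim_R M\otimes^L_R C_M = \sup\{\dim_R\Ho_i(M)\otimes_R K^j_M - i - j \mid j<t\}\le 0$; wait — more carefully, the grading shift in that display already absorbs the $-i-j$, so the supremum is $\le 0$, i.e. $\dim_R M\otimes^L_R C_M\le 0 < t$. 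That $t$ is actually attained for $M\otimes^L_R K_M$ I would get from the fact that $\Ho_t(M\otimes^L_R K_M)\cong \Ho_t(M)\otimes_R K_M$ (top homology of a tensor product) is nonzero at a suitable anchor prime, or more cleanly: $\sup(M\otimes^L_R K_M)=\sup M + \sup K_M$ is not what's wanted — instead use that the triangle shows $K^{i}_{M\otimes^L_R K_M}$ agrees with $\Ext^{-i}_R(K_M,K_M)$ in high degrees, and the latter is nonzero in degree $-t + t = \dim$? Let me instead simply invoke $(\ref{sdagger})$ after establishing the isomorphism below.

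Next I would prove the isomorphism $\Ext^{-i}_R(K_M,K_M)\to K^{i+t}_{M\otimes^L_R K_M}$ for $i>\sup C_M - t$. The point is that $K_M = \Ho_t(M^\dagger)$ and, by the argument above, $K_M \simeq$ the top homology module viewed as a complex, but more usefully: applying $\mathbf R\Hom_R(M,-)$ to the triangle $(\ref{trr})$ and using $\mathbf R\Hom_R(M,K_M^\dagger)\simeq (M\otimes^L_R K_M)^\dagger$, together with $\Ext_R^{-i}(M,M)$ replaced by $\Ext_R^{-i}(K_M,K_M)$ — here is where I must be careful, since $N=M$ in Theorem~\ref{maintheorem} gives $\Ext_R^{-i}(M,M)$, not $\Ext^{-i}_R(K_M,K_M)$. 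So instead I would apply Theorem~\ref{maintheorem} with $N$ playing a different role, or note $M\otimes^L_R K_M \simeq K_M \otimes^L_R K_M$? No. The honest route: set $L:=M\otimes^L_R K_M$; then $K_L^{\dim L}=K_L$ and Example~\ref{S_1}-type reasoning plus $\dim L = t$ gives $K_L = \Ho_t((M\otimes^L_R K_M)^\dagger)=\Ho_t(\mathbf R\Hom_R(M,K_M^\dagger)\text{-shifted})$. By biduality $K_M^{\dagger}$ has top homology in degree $-$something; unwinding, $K_L \cong \Hom_R(K_M, K_M)$ once one knows $\sup$ of the relevant complex. Concretely: $\mathbf R\Hom_R(M, K_M^\dagger) \simeq \mathbf R\Hom_R(K_M^{\dagger\dagger}, K_M^\dagger)\simeq \mathbf R\Hom_R(K_M,K_M)$ fails because $M\neq K_M^{\dagger\dagger}$ in general. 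The correct observation is that $\mathbf R\Hom_R(M, K_M^\dagger)$ and $\mathbf R\Hom_R(K_M^{\dagger\dagger}, K_M^\dagger)$ have the same homology in degrees $\le \inf(\text{obstruction})$, because $M$ and $K_M^{\dagger\dagger}=\sum^{-t}K_M^\dagger{}^\dagger$ differ by $C_M^\dagger$ via triangle $(\ref{trr})$, and $\mathbf R\Hom_R(C_M^\dagger, K_M^\dagger)\simeq \mathbf R\Hom_R(K_M, C_M)$-shifted has bounded-below behavior controlled by $\sup C_M$.

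So the clean sequence of steps is: (1) note $M$ equidimensional; (2) apply $\mathbf R\Hom_R(-,K_M^\dagger)$ to the triangle $(\ref{trr})$, getting $\mathbf R\Hom_R(C_M^\dagger,K_M^\dagger)\to \mathbf R\Hom_R(\sum^{-t}K_M^{\dagger\dagger}, K_M^\dagger)\to \mathbf R\Hom_R(M,K_M^\dagger)\to$ (shift); (3) identify the middle term: $\mathbf R\Hom_R(\sum^{-t}K_M^{\dagger\dagger},K_M^\dagger)\simeq \sum^{t}\mathbf R\Hom_R(K_M,K_M)$ by adjointness/biduality, so its $i$-th homology is $\Ext^{-(i-t)}_R(K_M,K_M)=\Ext^{t-i}_R(K_M,K_M)$ — and re-indexing, $K^{i+t}_{M\otimes^L_R K_M}=\Ho_{i+t}((M\otimes^L_R K_M)^\dagger)=\Ho_{i+t}(\mathbf R\Hom_R(M,K_M^\dagger))$; (4) identify $\mathbf R\Hom_R(C_M^\dagger,K_M^\dagger)\simeq \mathbf R\Hom_R(K_M, C_M)$, whose supremum is $\le -\inf(\text{something})$; using $\inf C_M = \depth M$ and $\dim_R K_M\le \dim_R M = t$ one bounds $\sup \mathbf R\Hom_R(K_M,C_M)$ in terms of $\sup C_M$, getting that the connecting maps are iso/injective precisely for $i > \sup C_M - t$ (the numerology: the error term $\mathbf R\Hom_R(K_M, C_M)$, after the degree shift from $(\ref{trr})$, has homology concentrated in degrees that force the comparison map $\Ho_{i+t}$ to be an isomorphism exactly when $i+t > \sup C_M$, i.e. $i > \sup C_M - t$); (5) the "in particular" follows by taking $i$ large, specifically $i=0$ is allowed since $\sup C_M < \dim_R M = t$ always (Remark~\ref{infCM_p}, as $\sup C_M \le t-1$), hence $0 > \sup C_M - t$, giving $K_{M\otimes^L_R K_M}=K^t_{M\otimes^L_R K_M}\cong \Ext^0_R(K_M,K_M)=\Hom_R(K_M,K_M)$; and this simultaneously shows $\dim_R M\otimes^L_R K_M = t$ since $\Hom_R(K_M,K_M)\neq 0$ realizes the top homology of the dagger dual, forcing the supremum of that dual — which equals $\dim_R(M\otimes^L_R K_M)$ by $(\ref{sdagger})$ — to be exactly $t$.

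The main obstacle I anticipate is the bookkeeping in step (4): correctly pinning down the range of degrees in which $\mathbf R\Hom_R(K_M, C_M)$ (equivalently $\mathbf R\Hom_R(C_M^\dagger, K_M^\dagger)$, suitably shifted) can have nonzero homology, and verifying that this range is exactly the complement of $\{i : i > \sup C_M - t\}$ so that the long exact sequence yields isomorphisms in the claimed range and no better. This requires the estimate $\sup \mathbf R\Hom_R(K_M, C_M) \le \sup C_M - \inf K_M$ together with $\inf K_M = \depth M^{\dagger\dagger}$-type identities; one must be slightly careful because $C_M$ need not be equidimensional, but only upper bounds on its homological dimension are needed, which follow from Lemma~\ref{assK_M}~a) ($\dim_R K^j_M \le j + \sup M$) — actually the cleaner estimate is simply $\sup\mathbf R\Hom_R(K_M, C_M)\le \sup C_M$ since $\inf K_M \ge 0$ is false in general, so one genuinely needs $\inf K_M = \depth M \ge -\sup M$... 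I would resolve this by instead bounding via $\dim$: $\sup\mathbf R\Hom_R(K_M,C_M) = \dim_R(K_M\otimes^L_R C_M^{\text{?}})$ — no, the simplest correct bound is the one used throughout Lemma~\ref{mainlemma}, namely via $-\sup\mathbf R\Hom_R(K_M,C_M^{\dagger\dagger})$... in any case, the estimate needed is elementary and follows the template of the $b)\Leftrightarrow c)$ argument in Lemma~\ref{mainlemma}. Everything else is formal manipulation of exact triangles and the standard identities $(\ref{sdagger})$, $(\ref{idagger})$, $(\ref{daggerloc})$.
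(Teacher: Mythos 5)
Your final outline (steps (1)--(5)) does capture the paper's argument, though you reach it after a long detour and there is one genuine mathematical error embedded along the way.

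The error: you assert near the start, and repeat as step (1), that ``by Lemma~\ref{changcanonical}~a) and Proposition~\ref{suppK_M}~b), $M$ is automatically equidimensional.'' This is false --- neither result implies equidimensionality, and a general complex in $D^f_b(R)$ is not equidimensional (consider, e.g., a direct sum of cyclic modules of different dimensions). This matters because your early attempt to bound $\dim_R M\otimes^L_R C_M$ via Lemma~\ref{mainlemma}~d) with $k=0$, and your instinct to lean on Theorem~\ref{maintheorem} with $N=M$, both require that equidimensionality hypothesis; those routes are therefore not available here, which is presumably why you abandoned them. Fortunately, as you correctly discover, the final argument does not need them and step (1) is harmlessly superfluous.

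The final argument you settle on is, up to dagger duality, exactly the paper's: you apply ${\bf R}\Hom_R(-,K_M^\dagger)$ to triangle $(\ref{trr})$, identify the three vertices as $\textstyle\sum^t {\bf R}\Hom_R(K_M,K_M)$, $(M\otimes^L_R K_M)^\dagger$, and ${\bf R}\Hom_R(K_M,C_M)$, and then read off the long exact sequence. The paper instead applies ${\bf R}\Hom_R(K_M,-)$ to triangle $(\ref{rt})$; since $(\ref{trr})$ is the dagger dual of $(\ref{rt})$ and ${\bf R}\Hom_R(X,Y^\dagger)\simeq{\bf R}\Hom_R(Y,X^\dagger)$, these are the same computation. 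The one place where you hedge unnecessarily is the bound $\sup{\bf R}\Hom_R(K_M,C_M)\le\sup C_M$: you worry about $\inf K_M$, but since $K_M$ is a nonzero finitely generated module we simply have $\inf K_M=0$, and the standard estimate $\sup{\bf R}\Hom_R(X,Y)\le\sup Y-\inf X$ (the paper's citation of~\cite[Proposition~A.4.6]{C.L.N}) gives it directly --- no appeal to Lemma~\ref{mainlemma} or to $\depth$/$\dim$ is needed. Finally, the paper establishes $\dim_R M\otimes^L_R K_M=t$ up front via the same estimate (using $\Hom_R(K_M,K_M)\ne0$ to force equality), whereas you fold it in at the end by observing that $\Ho_t((M\otimes^L_R K_M)^\dagger)\cong\Hom_R(K_M,K_M)\ne0$ while higher homology vanishes; both are fine. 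Also note the typo $K_M^{\dagger\dagger}$ in your step (3), which should be $K_M^\dagger$; the subsequent identification then uses ${\bf R}\Hom_R(K_M^\dagger,K_M^\dagger)\simeq{\bf R}\Hom_R(K_M,K_M)$, as in Lemma~\ref{Extlemma}.
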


\begin{proof}
By adjointness $$(M\otimes^L_R K_M)^{\dagger} \simeq {\bf R}\Hom_R(K_M,M^{\dagger}).$$ Since $\Hom_R(K_M,K_M)\not=0$, it follows from formula $(\ref{sdagger})$ and~\cite[Proposition A.4.6]{C.L.N} 
that $$\dim M \otimes^L_R K_M= \sup {\bf R}\Hom_R(K_M,M^{\dagger})=\sup M^{\dagger}-\inf K_M=\dim M.$$ An application of the functor ${\bf R}\Hom_R(K_M,-)$ on~$(\ref{rt})$, yields the exact triangle
\begin{align*}
 \textstyle\sum^{-t-1}{\bf R}\Hom_R(K_M,C_M) \rightarrow {\bf R}&\Hom_R(K_M,K_M)\\ &\rightarrow \textstyle\sum^{-t}{\bf R}\Hom_R(K_M,M^\dagger)\\
&\hskip1.5truecm\rightarrow \textstyle\sum^{-t}{\bf R}\Hom_R(K_M,C_M).
\end{align*}
The desired isomorphism now follows from the corresponding long exact sequence of homology, because $\sup {\bf R}\Hom_R(K_M,C_M) \le \sup C_M$ by~\cite[Proposition A.4.6]{C.L.N}.

\end{proof}

Let us then consider the conditions $(S_{k,l})$.

\begin{Corollary}
\label{schenzeltheorem}Let $(R,m)$ be a local ring admitting a dualizing complex. Let $k,l\in \mathbb Z$ and $M\in D^f_b(R)$. Set $t=\dim_RM$. If $M$ is equidimensional, then the following conditions are equivalent:
\begin {itemize}
\item[a)] $M$ satisfies  $(S_{k,l})$;
\item[b)]The natural homomorphism $\Ho_i(M) \rightarrow K^{i+t}_{K_M}$ is bijective for $i\geq l-k+2$, and injective for $i=l-k+1$;
\item[c)]The natural homomorphism $$\Ho_m^{i+t}(K_M)\rightarrow \Hom_R(\Ho_i(M),E_R(k))$$ is bijective for $i\geq l-k+2$, and surjective for $i=l-k+1$.
\end{itemize}

\end{Corollary}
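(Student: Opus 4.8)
The plan is to deduce Corollary~\ref{schenzeltheorem} from Theorem~\ref{maintheorem} by specializing the complex $N$ appropriately, together with a local-duality translation for part c). First I would observe that the condition $(S_{k,l})$ is, by definition, the condition $(S_{k,N})$ with $N=\sum^l R$. Thus the equivalence of a) and b) should follow by plugging $N=\sum^l R$ into Theorem~\ref{maintheorem}, provided I identify the two sides of the displayed homomorphism there with the ones appearing here (after a shift by $l$). On the $\Ext$ side we have $\Ext_R^{-i}(\sum^l R, M)\cong \Ho_{i}(\sum^{-l}M)=\Ho_{i+l}(M)$, so the bijectivity range $i\ge -k+2$ in Theorem~\ref{maintheorem} becomes $i+l\ge l-k+2$; reindexing $i\rightsquigarrow i-l$ (equivalently renaming the homological index) gives exactly the stated range $i\ge l-k+2$ with injectivity at $i=l-k+1$. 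On the target side, $N\otimes_R^L K_M=\sum^l R\otimes_R^L K_M=\sum^l K_M$, and since $K^{j}_{\sum^l K_M}=\Ho_j((\sum^l K_M)^\dagger)=\Ho_j(\sum^{-l}K_M^\dagger)=\Ho_{j+l}(K_M^\dagger)=K^{j+l}_{K_M}$, the module $K^{i+t}_{N\otimes_R^L K_M}$ becomes $K^{i+t-l}_{K_M}$; after the same reindexing this is $K^{i+t}_{K_M}$, matching b). One should also check that the ``natural homomorphism'' of Theorem~\ref{maintheorem} really becomes the natural map $\Ho_i(M)\to K^{i+t}_{K_M}$ under these identifications, which is a matter of unwinding the construction (it comes from the morphism $M\to \sum^{-t}K_M^\dagger$ in the triangle~$(\ref{trr})$, dualized).

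For the equivalence of b) and c), the plan is a direct application of local duality together with Matlis duality. Applying $\Hom_R(-,E_R(k))$ to the natural morphism $M\to \sum^{-t}K_M^\dagger$ (the one inducing the map in b)) and using the local duality formula~$(\ref{LDT})$, namely ${\bf R}\Gamma_m(N)\simeq \Hom_R(N^\dagger, E_R(k))$, one gets a morphism whose homology in degree $i+t$ is, on one side, $\Ho_m^{i+t}(K_M)$ and, on the other, $\Hom_R(\Ho_i(M),E_R(k))$. Since Matlis duality over a complete local ring is exact and faithful — more precisely, since $E_R(k)$ is injective, $\Hom_R(-,E_R(k))$ is exact, and a homomorphism of finitely generated modules is bijective (resp.\ injective) iff its Matlis dual is bijective (resp.\ surjective) — the bijectivity of $\Ho_i(M)\to K^{i+t}_{K_M}$ for $i\ge l-k+2$ is equivalent to the bijectivity of the dual map $\Ho_m^{i+t}(K_M)\to \Hom_R(\Ho_i(M),E_R(k))$ in that range, and injectivity at $i=l-k+1$ dualizes to surjectivity at $i=l-k+1$. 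The apparent asymmetry (``surjective'' rather than ``injective'') is exactly what Matlis duality produces from ``injective''.

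I expect the main obstacle to be bookkeeping of indices and of the naturality of the maps rather than any serious mathematical difficulty: one must be careful that the suspension $\sum^l$ interacts correctly with $\otimes_R^L$, with $(-)^\dagger$, and with the homological grading (signs and shifts), and that the map furnished abstractly by the long exact sequence in the proof of Theorem~\ref{maintheorem} is literally the ``natural'' one referred to in the statement of the Corollary. A minor subtlety worth flagging is that the target $K^{i+t}_{K_M}$ already presupposes $t=\dim_R M$ while the canonical module of $K_M$ lives in degree $\dim_R K_M$, which by Lemma~\ref{assK_M} a) need not equal $t$; but $K^j_{K_M}$ is defined for all $j$ as $\Ho_j(K_M^\dagger)$, so no issue arises — one is simply using a module of deficiency of $K_M$, not necessarily its canonical module. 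Finally, for the Matlis-duality step to be clean one typically passes to the $m$-adic completion (harmless, since all modules in sight are finitely generated and the hypotheses are stable under completion, $R$ still admitting a dualizing complex), so I would either note this reduction or invoke the standard fact that $\Hom_R(-,E_R(k))$ is faithfully exact on the category of $m$-torsion and finitely generated modules.
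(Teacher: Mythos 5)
Your argument is correct and coincides with the paper's: a)$\Leftrightarrow$b) is exactly Theorem~\ref{maintheorem} specialized to $N=\sum^{l}R$ (after reindexing the homological degree by $l$), and b)$\Leftrightarrow$c) is the Matlis-dual statement obtained from local duality, with the ``injective'' to ``surjective'' flip being precisely what $\Hom_R(-,E_R(k))$ produces. The only slip is a sign typo: by your own computation $K^{j}_{\sum^{l}K_M}=K^{j+l}_{K_M}$, so $K^{i+t}_{N\otimes_R^{L}K_M}$ equals $K^{i+t+l}_{K_M}$ rather than $K^{i+t-l}_{K_M}$, after which the reindexing $i\rightsquigarrow i-l$ does land on $K^{i+t}_{K_M}$ as you assert; and the caveat about passing to the completion is unnecessary, since $E_R(k)$ is already an injective cogenerator over any Noetherian local ring, so $\Hom_R(-,E_R(k))$ is faithfully exact without any completeness hypothesis.
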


\begin{proof} The equivalence of a) and b) follows immediately from Theorem~\ref{maintheorem} by taking $N=\sum^{l}R$ whereas the homomorphism of c) is by local duality the Matlis-dual of that of b).
\end{proof}

If $R$ is a ring and $N$ is an $R$-module, we use the notation 
$$\Assh_R N=\{p\in \Supp_R N\mid \dim R\slash p=\dim_R N\}.$$

\begin{Corollary}\label{H_SK_M}Let $(R,m)$ be a local ring admitting a dualizing complex. Let $M\in D^f_b(R)$ be an equidimensional complex. Set $t=\dim_RM$ and $s=\sup M$. If $M$ satisfies 
Serre's condition $(S_1)$, then 
\begin{itemize}
\item[a)]$\dim_R\Ho_s(M)=\dim_RK_M=s+t$;
\item[b)]$\Ass_R\Ho_s(M)=\Assh_R\Ho_s(M)$.
\end{itemize}
\end{Corollary}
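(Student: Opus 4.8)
The plan is to deduce both statements from the results already established for Serre's conditions, applied with $k=1$. Since $M$ satisfies $(S_1)$, Remark~\ref{supremark} gives that $M$ satisfies $(S_{1,s})$ as well, where $s=\sup M$. Now apply Corollary~\ref{schenzeltheorem} with $k=1$ and $l=s$: the natural homomorphism $\Ho_i(M)\rightarrow K^{i+t}_{K_M}$ is bijective for $i\geq s+1$ and injective for $i=s$. The first step is to extract dimension information from this. Localizing the injectivity at $i=s$ appropriately (or reading it off the support), one sees that $\Ho_s(M)$ embeds into $K^{s+t}_{K_M}=K_{K_M}$ after suitable identification; more precisely, I would argue that $\Supp_R\Ho_s(M)\subset\Supp_R K^{s+t}_{K_M}$, whence by Lemma~\ref{assK_M}~a) applied to $K_M$ we get $\dim_R\Ho_s(M)\leq (s+t)$. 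Actually the cleanest route is: the injection $\Ho_s(M)\hookrightarrow K^{s+t}_{K_M}$ combined with Lemma~\ref{assK_M}~a) (with $M$ replaced by $K_M$, noting $\dim_R K_M\leq s+t$ by Corollary~\ref{w_0}'s input Lemma~\ref{assK_M}~a)) forces $\dim_R\Ho_s(M)\leq s+t$, and one already knows $\dim_R\Ho_s(M)\leq s+t$ unconditionally since $\Ho_s(M)\cong K_{M^\dagger}$ (this is recorded in the proof of Corollary~\ref{w_0}). So the real content of a) is the reverse inequality $\dim_R\Ho_s(M)\geq s+t$.

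For the reverse inequality, I would use equidimensionality together with Proposition~\ref{suppK_M}~a). Pick an anchor prime: by equidimensionality of $M$, there is $p\in\Supp_RM$ with $\dim R/p=t+\inf M_p$, and by formula~$(\ref{dim})$ one can choose $i$ realizing $\dim_RM$, i.e.~with $\dim_R\Ho_i(M)-i=t$. The point is to show this forces $i=s$ under $(S_1)$. Here is where $(S_1)$ enters: at such a prime $p$ lying in $(\Ass_R\Ho_i(M))_{i+t}$ (using the Remark after Proposition~\ref{suppK_M} identifying $\{p\in\Supp_RM\mid\dim R/p=t+\inf M_p\}$ with $\bigcup_i(\Ass_R\Ho_i(M))_{i+t}$), we have $\depth_{R_p}M_p=-\sup M_p$ by formula~$(\ref{haft})$-type reasoning, hence $M_p$ has depth $\leq -\inf M_p$; but $p$ being an anchor prime means $\dim_{R_p}M_p=-\inf M_p$, so $M_p$ is Cohen–Macaulay and $\sup M_p=\inf M_p$. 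Then $(S_1)$ at non-Cohen–Macaulay primes is automatic and gives no obstruction; rather the argument should run: choose $p$ with $\dim_R\Ho_s(M)-s$ possibly less than $t$, and show $(S_1)$ propagates Cohen–Macaulayness so that in fact the maximum in $(\ref{dim})$ is attained at $i=s$. Concretely, I would show $(\Ass_RK_M)_{s+t}=(\Ass_R\Ho_s(M))_{s+t}$ is nonempty: by Proposition~\ref{suppK_M}~a), $\Ass_RK_M$ consists of $p$ with $\dim R/p=t+\inf M_p$; at such $p$, $M_p$ is Cohen–Macaulay (it is an anchor prime), so $\sup M_p=\inf M_p=-\dim_{R_p}M_p$; one must then upgrade this to $\sup M_p=s$, which is exactly what $(S_1)$ together with equidimensionality should deliver via a localization/deformation argument on the chain from $p$ to $m$. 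This nonemptiness, via Lemma~\ref{assK_M}~b)-c) and Corollary~\ref{w_0}, yields $\dim_R\Ho_s(M)=s+t=\dim_R K_M$.

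For part b), once a) is known, the inclusion $\Assh_R\Ho_s(M)\subset\Ass_R\Ho_s(M)$ is trivial from the definitions, so it suffices to prove $\Ass_R\Ho_s(M)\subset\Assh_R\Ho_s(M)$, i.e.~every associated prime $p$ of $\Ho_s(M)$ has $\dim R/p=\dim_R\Ho_s(M)=s+t$. Take $p\in\Ass_R\Ho_s(M)$. Then $\depth_{R_p}M_p=-s$ by formula~$(\ref{haft})$, so $\depth_{R_p}M_p=-\sup M\leq-\sup M_p\leq-\inf M_p\leq\dim_{R_p}M_p$ forces in particular $\sup M_p=s$; now invoke $(S_1)$: since $\depth_{R_p}M_p=-s<1-\inf M_p$ would follow once $\inf M_p>-s-1$... here I need to be careful, so instead I would apply Proposition~\ref{SerreCM}: $(S_1)$ says $M_p$ is Cohen–Macaulay whenever $\depth_{R_p}M_p<1-\inf M_p$, i.e.~whenever $\depth_{R_p}M_p\leq-\inf M_p$. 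Since $\depth_{R_p}M_p=-s\leq-\inf M_p$ always holds, $M_p$ is Cohen–Macaulay, so $-s=\depth_{R_p}M_p=\dim_{R_p}M_p$ and $\inf M_p=\sup M_p=-s$. But then $p$ is an anchor prime with $\dim R/p=t+\inf M_p=t-s$... which is wrong in sign. Let me recompute: anchor means $\dim_{R_p}M_p=-\inf M_p$, and here $\dim_{R_p}M_p=-s=\inf M_p$, so $-\inf M_p=s=-s$?? This only works if I have the right normalization; the correct statement from Proposition~\ref{suppK_M}~a) and equidimensionality is $\dim R/p=t+\inf M_p=t-s$ — no. The resolution is that $\Ass_R\Ho_s(M)=(\Ass_R\Ho_s(M))_{s+t}$ follows directly: $\Ho_s(M)\cong K_{M^\dagger}$ and $\dim_R M^\dagger=s$, $\sup M^\dagger=t$, so Lemma~\ref{assK_M} applied to $M^\dagger$ gives $(\Ass_R K_{M^\dagger})_i=(\Ass_R\Ho_t(M^\dagger))_i=(\Ass_R K_M)_i$ by formula~$(\ref{sdagger})$, and this is concentrated in degree $i=s+t$ once a) holds. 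So b) is essentially a formal consequence of a) plus Lemma~\ref{assK_M}.

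\textbf{Main obstacle.} The crux is part a), specifically the inequality $\dim_R\Ho_s(M)\geq s+t$: one must show that the top homology $\Ho_s(M)$ is ``big enough'', and the only leverage is $(S_1)$ plus equidimensionality. The delicate point is transferring the Cohen–Macaulay condition at anchor primes (which is automatic) into a statement that $\sup M_p=s$ there — equivalently, that $(S_1)$ prevents the supremum from dropping at primes realizing the dimension. I expect this to require a localization argument combined with the characterization of $(S_1)$ via Proposition~\ref{SerreCM} and careful bookkeeping with formulas~$(\ref{haft})$, $(\ref{dim})$, and Proposition~\ref{suppK_M}; the sign/normalization conventions for $\inf$, $\sup$, $\depth$ and $\dim$ of complexes must be tracked with particular care, as the naive computation above illustrates.
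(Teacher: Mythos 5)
The proposal correctly sets up Corollary~\ref{schenzeltheorem} with $k=1$, $l=s$ and gets the injection $\Ho_s(M)\hookrightarrow K^{s+t}_{K_M}$, and it correctly identifies the upper bound $\dim_R\Ho_s(M)\le s+t$. But from there the two parts have genuine gaps.

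For a), you overlook the one-line punch that the paper extracts from the injectivity: since $\Ho_s(M)\neq 0$ (as $s=\sup M$), the injection forces $K^{s+t}_{K_M}=\Ho_{s+t}(K_M^\dagger)\neq 0$, and by formula~$(\ref{sdagger})$ this means $\dim_RK_M=\sup K_M^\dagger\ge s+t$. Combined with $\dim_RK_M\le s+t$ from Lemma~\ref{assK_M}~a), this gives $\dim_RK_M=s+t$, and then Corollary~\ref{w_0} immediately yields $\dim_R\Ho_s(M)=s+t$. Instead of this, you launch into a localization argument about anchor primes, trying to produce a prime $p$ with $\dim R/p=t+\inf M_p$ and $\inf M_p=s$, and you yourself admit that the sign bookkeeping goes wrong and that the upgrade $\sup M_p=s$ ``should'' be delivered by some deformation argument that is never supplied. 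That is a real hole; equidimensionality and $(S_1)$ at an anchor prime only give $M_p$ Cohen--Macaulay, not $\inf M_p=s$, and it is not clear how to close your version of the argument.

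For b), the final claim ``b) is essentially a formal consequence of a) plus Lemma~\ref{assK_M}'' is incorrect. Knowing $\dim_R\Ho_s(M)=s+t$ only controls the largest $\dim R/p$ among the associated primes; it does not preclude embedded primes with $\dim R/p<s+t$, so it does not give $\Ass_R\Ho_s(M)=(\Ass_R\Ho_s(M))_{s+t}$. The paper's proof of b) uses $(S_1)$ a second time: from a), the injective map of Corollary~\ref{schenzeltheorem} at $i=s$ is now $\Ho_s(M)\hookrightarrow K_{K_M}$ (because $K^{s+t}_{K_M}=K_{K_M}$ once $\dim_RK_M=s+t$), hence $\Ass_R\Ho_s(M)\subset\Ass_RK_{K_M}$; then one invokes the module-level fact $\Ass_RK_{K_M}=(\Ass_RK_M)_{s+t}$ from~\cite[Proposition 2.3~b)]{Schbi} and finally Lemma~\ref{assK_M}~b) to convert $(\Ass_RK_M)_{s+t}$ into $(\Ass_R\Ho_s(M))_{s+t}$. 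Both the second use of the injectivity and the citation to Schenzel's result about $\Ass_RK_{K_M}$ are missing from your argument.
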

\begin{proof} $a)$ Recall that $(S_1)$ implies $S_{1,s}$. By Corollary~\ref{schenzeltheorem} the natural homomorphism $\Ho_s(M)\rightarrow K^{s+t}_{K_M}$ is injective. Then $K^{s+t}_{K_M}\neq0$ so that by Lemma~\ref{assK_M} a) we must have
$\dim_RK_M=s+t$. It now follows from Corollary~\ref{w_0} that $\dim\Ho_s(M)=s+t$, too.

\smallskip
\noindent$b)$ Because $\dim_R\Ho_s(M)=s+t$ by a), it is enough to show that $\Ass_R\Ho_s(M)=(\Ass_R\Ho_s(M))_{s+t}$. By a) we also have an injective homomorphism $\Ho_s(M)\rightarrow K_{K_M}$. So $$\Ass_R\Ho_s(M)\subset \Ass_RK_{K_M}=(\Ass_RK_M)_{s+t},$$ where the last equality comes from~\cite[Proposition 2.3 b)]{Schbi}. Since $(\Ass_RK_M)_{s+t}=(\Ass_R\Ho_s(M))_{s+t}$
by Lemma~\ref{assK_M} b),  we get $$\Ass_R\Ho_s(M)=(\Ass_R\Ho_s(M))_{s+t}$$ as wanted.
\end{proof}

\begin{Corollary}\label{H_SK_M2'}Let $(R,m)$ be a local ring admitting a dualizing complex. Let $M\in D^f_b(R)$ be an equidimensional complex. If $M$ satisfies Serre's condition $(S_2)$, then 
$$\Hom_{D(R)}(M,M)\cong \Hom_R(K_M,K_M)$$ and $\Ho_s(M)\cong K_{K_M}$. Moreover, if $K_M$ is equidimensional and satisfies Serre's condition $(S_2)$, then $K_M\cong K_{\Ho_s(M)}$.
\end{Corollary}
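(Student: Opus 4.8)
The plan is to run the argument in three stages, each building on the previous. First I establish the isomorphism $\Hom_{D(R)}(M,M)\cong\Hom_R(K_M,K_M)$ together with $\Ho_s(M)\cong K_{K_M}$; then I bootstrap the latter into the statement about $K_M$ under the extra hypotheses on $K_M$. Throughout, write $t=\dim_R M$ and $s=\sup M$.

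For the first isomorphism, apply Theorem~\ref{maintheorem} with $N=M$ and $k=2$: since $M$ is equidimensional and satisfies $(S_2)$, the natural map $\Ext^{-i}_R(M,M)\to K^{i+t}_{M\otimes^L_R K_M}$ is bijective for $i\ge 0$ and injective for $i=-1$. Taking $i=0$ gives $\Hom_{D(R)}(M,M)=\Ext^0_R(M,M)\cong K^t_{M\otimes^L_R K_M}$, and by Proposition~\ref{tensorcanonical} we have $\dim_R M\otimes^L_R K_M=t$, so $K^t_{M\otimes^L_R K_M}=K_{M\otimes^L_R K_M}=\Hom_R(K_M,K_M)$; this is exactly the first claim. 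For the second, note $(S_2)$ implies $(S_{2,s})$ by Remark~\ref{supremark}, so Corollary~\ref{schenzeltheorem} (with $k=2$, $l=s$) gives that $\Ho_i(M)\to K^{i+t}_{K_M}$ is bijective for $i\ge s$ and injective for $i=s-1$. For $i>s$ both sides vanish (the left by definition of $s$, the right by Lemma~\ref{assK_M} a) applied to $K_M$, since $\dim_R K^{i+t}_{K_M}\le i+t+\sup K_M$ forces it to be zero once $i+t>\dim_R K_M$ — here one uses $\dim_R K_M\le s+t$ from Lemma~\ref{assK_M} a) applied to $M$), while at $i=s$ bijectivity gives $\Ho_s(M)\cong K^{s+t}_{K_M}$. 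It remains to identify $K^{s+t}_{K_M}$ with $K_{K_M}=K^{\dim_R K_M}_{K_M}$, i.e.\ to know $\dim_R K_M=s+t$. This does not follow from $(S_2)$ alone in full generality, but $(S_2)\Rightarrow(S_1)$, so Corollary~\ref{H_SK_M} a) applies and yields precisely $\dim_R K_M=s+t$. Hence $\Ho_s(M)\cong K_{K_M}$.

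For the final assertion, assume in addition that $K_M$ is equidimensional and satisfies $(S_2)$. Set $N:=K_M$; it is a module, so $\sup N=0$ and $\dim_R N=s+t=:t'$ by the previous paragraph. Applying the already-proved second statement of this very corollary to $N$ in place of $M$ gives $\Ho_0(N)\cong K_{K_N}$, i.e.\ $K_M\cong K_{K_{K_M}}$. On the other hand, we showed $\Ho_s(M)\cong K_{K_M}$, so $K_{K_{K_M}}\cong K_{K_M}$ would only give $K_M\cong K_{K_M}$, which is not what we want; instead I should feed the identification the other way. The cleaner route: from $\Ho_s(M)\cong K_{K_M}$ we get $K_{\Ho_s(M)}\cong K_{K_{K_M}}$, and applying the second statement of the corollary to the $(S_2)$, equidimensional module $K_M$ gives $K_{K_{K_M}}\cong \Ho_0(K_M)=K_M$. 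Combining, $K_{\Ho_s(M)}\cong K_M$, which is the desired conclusion.

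The step I expect to be the main obstacle is the identification $K^{s+t}_{K_M}=K_{K_M}$, equivalently the equality $\dim_R K_M=s+t$, in the first part of the proof: $(S_2)$ by itself does not obviously pin down the dimension of the canonical module, and one must route through the weaker $(S_1)$ and Corollary~\ref{H_SK_M} a) to get it. A secondary subtlety is the vanishing of $K^{i+t}_{K_M}$ for $i>s$, which is needed so that the bijectivity range from Corollary~\ref{schenzeltheorem} actually isolates $\Ho_s(M)$ as the top nonzero piece; this is handled by the dimension bound in Lemma~\ref{assK_M} a). Everything else is a matter of carefully transporting the isomorphism statements through the functor $K_{(-)}$ and checking that the equidimensionality and Serre hypotheses are available at each application.
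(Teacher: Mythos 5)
Your proof is correct, and for the first two isomorphisms it matches the paper's argument exactly: Theorem~\ref{maintheorem} with $N=M$, $k=2$, $i=0$ together with Proposition~\ref{tensorcanonical} for $\Hom_{D(R)}(M,M)\cong\Hom_R(K_M,K_M)$; and Remark~\ref{supremark}, Corollary~\ref{schenzeltheorem} (with $k=2$, $l=s$), and Corollary~\ref{H_SK_M}~a) to pin down $\dim_R K_M=s+t$ and obtain $\Ho_s(M)\cong K_{K_M}$.

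For the final assertion you diverge from the paper. The paper obtains $K_{K_{K_M}}\cong K_M$ by invoking an external result of Schenzel (Theorem~1.14~(ii) of his lecture notes, a module-level statement about $(S_2)$ modules and their double canonical module), whereas you derive the same identity internally by applying the already-established second assertion of this very corollary to the module $K_M$ (concentrated in degree $0$, $\sup K_M=0$), using that $K_M$ is equidimensional and $(S_2)$ by hypothesis. There is no circularity since the second assertion rests only on Corollary~\ref{schenzeltheorem} and Corollary~\ref{H_SK_M}. Your route has the advantage of being self-contained within the paper's machinery; the paper's route simply delegates to a known result for modules. Both are valid, and they agree step for step once one unpacks Schenzel's theorem. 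One small stylistic note: the paragraph contains a visible false start (``$K_{K_{K_M}}\cong K_{K_M}$ would only give\ldots'') before arriving at the clean argument; in a polished write-up you would cut directly to applying $K_{(-)}$ to $\Ho_s(M)\cong K_{K_M}$ and then citing the bootstrap.
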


\begin{proof}Recall first that $\Hom_{D(R)}(M,M)\cong \Ext_R^0(M,M)$. The desired isomorphism $\Hom_{D(R)}(M,M))\cong \Hom_R(K_M,K_M)$ then comes from Theorem~\ref{maintheorem} by taking $N=M$ and Proposition~\ref{tensorcanonical} whereas Corollary~\ref{schenzeltheorem} and Corollary~\ref{H_SK_M} a) provide the isomorphism $\Ho_s(M)\rightarrow K_{K_M}$. Combining the latter with~\cite[Theorem 1.14 (ii)]{schenzel}, shows that $K_{\Ho_s(M)}\cong K_{K_{K_M}}\cong K_M$.   
\end{proof}

We now turn to look at the dagger dual:

\begin{Proposition}\label{dagserS_n}Let $(R,m)$ be a local ring admitting a dualizing complex, and let $M\in D^f_b(R)$. The complex $M^{\dagger}$ satisfies Serre's condition $(S_k)$ if and only if $\sup M_p=
\inf M_p$ for every $p\in\Supp_RM$ with $\depth M_p + \inf M_p < k$.
\end{Proposition}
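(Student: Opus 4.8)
The plan is to analyze when $M^\dagger$ satisfies $(S_k)$ by translating the condition, via the definition and the preceding results, into a statement about the Cohen--Macaulay locus of $M^\dagger$. Recall from the Notation that $M^\dagger$ satisfies $(S_k)$ precisely when
\[
\depth_{R_p}(M^\dagger)_p \geq \min\{k-\inf (M^\dagger)_p,\ \dim_{R_p}(M^\dagger)_p\}
\]
for all $p\in\Spec R$. By Proposition~\ref{SerreCM} (applied with $N=M^\dagger$), this is equivalent to saying that $(M^\dagger)_p$ is Cohen--Macaulay for every $p$ with $\depth_{R_p}(M^\dagger)_p < k - \inf(M^\dagger)_p$. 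So the whole statement reduces to: identify the local invariants $\depth_{R_p}(M^\dagger)_p$, $\inf(M^\dagger)_p$, $\dim_{R_p}(M^\dagger)_p$ in terms of $M$, and recognize the Cohen--Macaulay condition on $(M^\dagger)_p$ in terms of $M$.

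First I would use formula~$(\ref{daggerloc})$, which gives $(M_p)^{\dagger p}\simeq \sum^{-\dim R/p}(M^\dagger)_p$. Applying the identities $(\ref{sdagger})$ and $(\ref{idagger})$ to $M_p$ over the local ring $R_p$ yields
\[
\sup (M^\dagger)_p = \dim R/p + \sup (M_p)^{\dagger p} = \dim R/p + \dim_{R_p}M_p,
\]
\[
\inf (M^\dagger)_p = \dim R/p + \inf (M_p)^{\dagger p} = \dim R/p + \depth_{R_p}M_p,
\]
and, via biduality (so $\sup N = \dim_{R_p}N^{\dagger p}$ and $\inf N = \depth_{R_p}N^{\dagger p}$ for $N = M_p$),
\[
\depth_{R_p}(M^\dagger)_p = \dim R/p + \depth_{R_p}(M_p)^{\dagger p} = \dim R/p + \inf M_p,
\]
\[
\dim_{R_p}(M^\dagger)_p = \dim R/p + \sup M_p.
\]
From the first and third of these, the suspension-shift-invariant condition $\depth_{R_p}(M^\dagger)_p < k - \inf(M^\dagger)_p$ becomes $\dim R/p + \inf M_p < k - \dim R/p - \depth_{R_p}M_p$; but shifting a complex does not change whether a prime is ``bad'' in the sense of $(S_k)$ after accounting for $\inf$, so a cleaner route is to note that $M^\dagger$ satisfies $(S_k)$ over $R$ iff $(M^\dagger)_p$ satisfies $(S_k)$ over $R_p$ for all $p$ iff (using that $(M^\dagger)_p$ is a shift of $(M_p)^{\dagger p}$, and $(S_k)$ is shift-invariant in the appropriate normalized sense) $(M_p)^{\dagger p}$ satisfies $(S_k)$ over $R_p$. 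Over the local ring $R_p$ with its own maximal ideal, the condition $\depth (M_p)^{\dagger p} < k - \inf (M_p)^{\dagger p}$ reads $\inf M_p < k - \depth_{R_p}M_p$, i.e.\ $\depth_{R_p}M_p + \inf M_p < k$.

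It then remains to show that, for such a $p$, requiring $(M_p)^{\dagger p}$ Cohen--Macaulay (equivalently $(M^\dagger)_p$ Cohen--Macaulay) is the same as requiring $\sup M_p = \inf M_p$. One direction: dagger duality preserves the Cohen--Macaulay property, so $(M_p)^{\dagger p}$ is Cohen--Macaulay iff $M_p$ is Cohen--Macaulay over $R_p$, i.e.\ $\depth_{R_p}M_p = \dim_{R_p}M_p$. Combined with $\depth_{R_p}M_p + \inf M_p < k$ and the general inequalities $\depth_{R_p}M_p \geq -\sup M_p$ and $\dim_{R_p}M_p \geq -\inf M_p$ (the latter from $M_p\in D_+(R_p)$, the former from $(\ref{lsdagger})$), equality $\depth = \dim$ together with $-\sup M_p \leq \depth_{R_p}M_p$ and $\dim_{R_p}M_p \geq -\inf M_p$ forces $\sup M_p \leq -\depth_{R_p}M_p = -\dim_{R_p}M_p \leq \inf M_p$, hence $\sup M_p = \inf M_p$ (and then $M_p$ is, up to shift, a Cohen--Macaulay module). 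Conversely, if $\sup M_p = \inf M_p$ then $M_p$ is a shift of a single module; being Cohen--Macaulay is then automatic once $\depth_{R_p}M_p + \inf M_p < k$ is used to pin down the numerics, but in fact here I expect one must argue more carefully that $\sup M_p = \inf M_p$ already yields $(M_p)^{\dagger p}$ Cohen--Macaulay: when $M_p\simeq \sum^{n}X$ for a module $X$, $(M_p)^{\dagger p}\simeq \sum^{n}X^{\dagger p}$, and a module always has a Cohen--Macaulay dagger dual is false in general—so the hypothesis $\depth_{R_p}M_p + \inf M_p < k$ must be what is doing the work via Proposition~\ref{SerreCM}. The main obstacle, then, is this last equivalence: pinning down exactly why, among the ``bad'' primes (those with $\depth M_p + \inf M_p < k$), the $(S_k)$-condition on $M^\dagger$ is equivalent to $\sup M_p = \inf M_p$ rather than to outright Cohen--Macaulayness of $M_p$. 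I would resolve it by spelling out Proposition~\ref{SerreCM} for $M^\dagger$: it says $(S_k)$ for $M^\dagger$ iff $(M^\dagger)_p$ is Cohen--Macaulay whenever $\depth_{R_p}(M^\dagger)_p < k - \inf(M^\dagger)_p$; substituting the four formulas above, the inequality becomes $\dim R/p + \inf M_p < k - \dim R/p - \depth_{R_p}M_p$, which after the shift normalization is $\depth_{R_p}M_p + \inf M_p < k$, and ``$(M^\dagger)_p$ Cohen--Macaulay'' becomes (by the formulas for $\depth$ and $\dim$ of $(M^\dagger)_p$) $\inf M_p = \sup M_p$. That is precisely the claimed criterion, completing the proof.
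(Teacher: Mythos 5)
Your overall strategy matches the paper exactly: translate $(S_k)$ for $M^\dagger$ through Proposition~\ref{SerreCM} and the formulas derived from $(\ref{daggerloc})$, $(\ref{sdagger})$, $(\ref{idagger})$. But the execution has two genuine errors, and as a result the middle of the argument wobbles before accidentally landing near the right answer.

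First, two of your four formulas have the wrong sign. From $(M^\dagger)_p \simeq \sum^{\dim R/p}(M_p)^{\dagger p}$ you should get
\[
\dim_{R_p}(M^\dagger)_p = -\dim R/p + \sup M_p,\qquad
\depth_{R_p}(M^\dagger)_p = -\dim R/p + \inf M_p,
\]
since suspension \emph{decreases} dimension and depth by one, whereas it \emph{increases} $\inf$ by one (so $\inf(M^\dagger)_p = \dim R/p + \depth_{R_p}M_p$, which you have correctly). You instead wrote $+\dim R/p$ in all three places. This is why your inequality $\depth_{R_p}(M^\dagger)_p < k - \inf(M^\dagger)_p$ appears to produce an unwanted $2\dim R/p$; with the correct signs the $\dim R/p$ terms cancel and it reads $\depth_{R_p}M_p + \inf M_p < k$ directly, with no ``shift normalization'' hand-wave needed. (Your alternative route of working with $(M_p)^{\dagger p}$ over $R_p$ is legitimate, but then you should not subsequently plug back in the formulas you wrote for $(M^\dagger)_p$.)

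Second, the assertion that ``dagger duality preserves the Cohen--Macaulay property, so $(M_p)^{\dagger p}$ is Cohen--Macaulay iff $M_p$ is Cohen--Macaulay'' is false. Since $\depth N = \inf N^\dagger$ and $\dim N = \sup N^\dagger$, a complex $N$ is Cohen--Macaulay iff $N^\dagger$ has homology concentrated in a single degree; that is a different condition from $N^\dagger$ being Cohen--Macaulay. Concretely, $(M_p)^{\dagger p}$ is Cohen--Macaulay iff $\inf M_p = \sup M_p$, whereas $M_p$ is Cohen--Macaulay iff $\inf(M_p)^{\dagger p} = \sup(M_p)^{\dagger p}$. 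Your subsequent inequality chain ($\sup M_p \le -\depth_{R_p}M_p = -\dim_{R_p}M_p \le \inf M_p$) misreads $\depth_{R_p}M_p \ge -\sup M_p$ as $\sup M_p \le -\depth_{R_p}M_p$, which is the reverse, and does not yield the desired conclusion. The correct and short observation, which you do state in your final paragraph, is that Cohen--Macaulayness of $(M^\dagger)_p$ means $\depth_{R_p}(M^\dagger)_p = \dim_{R_p}(M^\dagger)_p$, which with the corrected formulas is exactly $\inf M_p = \sup M_p$. If you delete the detour through ``dagger duality preserves CM'' and fix the signs, the proof becomes the paper's proof.
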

\begin{proof} Note first that by using formula~$(\ref{daggerloc})$ together with formulas $(\ref{sdagger})$ and $(\ref{idagger})$ we get
$$\dim_{R_p}(M^{\dagger})_p=-\dim R\slash p+\sup M_p$$
and
$$\depth_{R_p}(M^{\dagger})_p=-\dim R\slash p+\inf M_p.$$
We also have
$$\inf (M^{\dagger})_p = \dim R\slash p+\inf M_p^{\dagger p}=\dim R\slash p+\depth_{R_p} M_p.$$
The claim then follows from Proposition~\ref{SerreCM}. 
\end{proof}

In order to apply Theorem~\ref{maintheorem} in this case, we need

\begin{Lemma}\label{Extlemma}Let $(R,m)$ be a local ring admitting a dualizing complex, and let $M\in D^f_b(R)$. Then $\Ext_R^i(M^\dagger,M^\dagger)\cong\Ext_R^i(M,M)$ for all $i\in \mathbb Z$.
\end{Lemma}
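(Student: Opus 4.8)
The plan is to exploit dagger duality, which is an involutive contravariant equivalence on $D^f_b(R)$. First I would recall that $\Ext_R^i(X,Y)=\Ho_{-i}({\bf R}\Hom_R(X,Y))$, so it suffices to produce an isomorphism ${\bf R}\Hom_R(M^\dagger,M^\dagger)\simeq {\bf R}\Hom_R(M,M)$ in $D(R)$. The natural tool is the standard fact that for $X,Y\in D^f_b(R)$ one has ${\bf R}\Hom_R(X^\dagger,Y^\dagger)\simeq {\bf R}\Hom_R(Y,X)$. Applying this with $X=Y=M$ gives ${\bf R}\Hom_R(M^\dagger,M^\dagger)\simeq {\bf R}\Hom_R(M,M)$, and taking homology in degree $-i$ yields the claim.

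The one point that needs care is justifying the duality isomorphism ${\bf R}\Hom_R(X^\dagger,Y^\dagger)\simeq {\bf R}\Hom_R(Y,X)$. I would derive it from adjointness together with biduality. Indeed, by adjointness (tensor-hom) one has ${\bf R}\Hom_R(X^\dagger,Y^\dagger)={\bf R}\Hom_R({\bf R}\Hom_R(X,D_R),{\bf R}\Hom_R(Y,D_R))\simeq {\bf R}\Hom_R({\bf R}\Hom_R(X,D_R)\otimes^L_R Y,D_R)=\bigl({\bf R}\Hom_R(X,D_R)\otimes^L_R Y\bigr)^\dagger$. Then, again by adjointness, ${\bf R}\Hom_R(X,D_R)\otimes^L_R Y\simeq Y\otimes^L_R{\bf R}\Hom_R(X,D_R)\simeq {\bf R}\Hom_R({\bf R}\Hom_R(Y,X),D_R)=\bigl({\bf R}\Hom_R(Y,X)\bigr)^\dagger$ — here one uses that, since $Y$ and $X$ lie in $D^f_b(R)$ and $D_R$ has finite injective dimension, the complex ${\bf R}\Hom_R(Y,X)$ is again in $D^f_b(R)$ and the canonical evaluation morphism $Z\otimes^L_R{\bf R}\Hom_R(W,D_R)\to {\bf R}\Hom_R({\bf R}\Hom_R(W,Z),D_R)$ is an isomorphism when $W,Z\in D^f_b(R)$ (this is a "tensor-evaluation" isomorphism, valid because $D_R$ has finite injective dimension). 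Combining, ${\bf R}\Hom_R(X^\dagger,Y^\dagger)\simeq \bigl(({\bf R}\Hom_R(Y,X))^\dagger\bigr)^\dagger\simeq {\bf R}\Hom_R(Y,X)$ by dagger biduality.

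Concretely, for the statement at hand I would write: by adjointness
\begin{equation*}
{\bf R}\Hom_R(M^\dagger,M^\dagger)\simeq {\bf R}\Hom_R\bigl({\bf R}\Hom_R(M,D_R)\otimes^L_R M,\,D_R\bigr)=\bigl(M\otimes^L_R M^\dagger\bigr)^\dagger,
\end{equation*}
and by tensor-evaluation together with adjointness
\begin{equation*}
M\otimes^L_R M^\dagger = M\otimes^L_R {\bf R}\Hom_R(M,D_R)\simeq {\bf R}\Hom_R\bigl({\bf R}\Hom_R(M,M),D_R\bigr)={\bf R}\Hom_R(M,M)^\dagger .
\end{equation*}
Hence ${\bf R}\Hom_R(M^\dagger,M^\dagger)\simeq ({\bf R}\Hom_R(M,M)^\dagger)^\dagger\simeq {\bf R}\Hom_R(M,M)$ by dagger duality, and passing to $\Ho_{-i}$ gives $\Ext_R^i(M^\dagger,M^\dagger)\cong\Ext_R^i(M,M)$ for all $i\in\mathbb Z$.

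The only real obstacle is making sure the tensor-evaluation morphism is an isomorphism, i.e.~that all the finiteness and boundedness hypotheses are met so that one may invoke the relevant lemma from the hyperhomological toolkit (e.g.~the tensor-evaluation isomorphism in \cite{C.L.N} or \cite{918}); everything else is formal manipulation with adjointness and the biduality $M\simeq M^{\dagger\dagger}$. Since $M\in D^f_b(R)$ and $D_R$ is a dualizing complex (finite injective dimension, bounded, finitely generated homology), all complexes appearing stay within $D^f_b(R)$, so the needed isomorphisms apply.
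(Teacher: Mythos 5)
Your proof is correct, but it takes a heavier route than the paper's. The paper simply invokes ``swap'' together with biduality: ${\bf R}\Hom_R(M^\dagger,M^\dagger)={\bf R}\Hom_R(M^\dagger,{\bf R}\Hom_R(M,D_R))\simeq{\bf R}\Hom_R(M,{\bf R}\Hom_R(M^\dagger,D_R))={\bf R}\Hom_R(M,M^{\dagger\dagger})\simeq{\bf R}\Hom_R(M,M)$. Swap is just adjointness applied twice (both sides are ${\bf R}\Hom_R(M\otimes^L_RM^\dagger,D_R)$), so it carries no finiteness hypotheses at all. You correctly reach $(M\otimes^L_RM^\dagger)^\dagger$ by one application of adjointness, but then reach for the Hom-evaluation morphism $X\otimes^L_R{\bf R}\Hom_R(Y,Z)\to{\bf R}\Hom_R({\bf R}\Hom_R(X,Y),Z)$ --- which does require $X\in D^f_b(R)$ and $Z$ of finite injective dimension --- where a second application of adjointness (i.e.~completing the swap) would already give ${\bf R}\Hom_R(M,M^{\dagger\dagger})$ and finish. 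You also repeatedly call that morphism ``tensor-evaluation,'' but it is the Hom-evaluation (or ``evaluation'') morphism; tensor-evaluation is ${\bf R}\Hom_R(X,Y)\otimes^L_RZ\to{\bf R}\Hom_R(X,Y\otimes^L_RZ)$. And in your general discussion the target of your evaluation morphism is written as ${\bf R}\Hom_R({\bf R}\Hom_R(W,Z),D_R)$ where it should be ${\bf R}\Hom_R({\bf R}\Hom_R(Z,W),D_R)$; this typo is harmless in the concrete application since there $W=Z=M$, but it should be fixed. In short: correct, but you used a finiteness-dependent tool where a formal adjointness identity suffices, and the paper's swap-based argument is the cleaner one.
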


\begin{proof}By `swap' (see~\cite[A.4.22]{C.L.N}) and dagger duality
$${\bf R}\Hom_R(M^\dagger,M^\dagger) \simeq {\bf R}\Hom_R(M, M^{\dagger\dagger}) \simeq {\bf R}\Hom_R(M,M).$$
The claim now follows by taking the homology.
\end{proof}

\begin{Corollary}\label{endotheorem2}Let $(R,m)$ be a local ring admitting a dualizing complex. Let $k\in \mathbb Z$ and $M\in D^f_b(R)$. Set $s=\sup M$. If $M^\dagger$ is equidimensional, then the following statements are equivalent:
\begin{itemize}
\item[a)]$M^\dagger$ satisfies condition $(S_k)$;
\item[b)] The natural homomorphism $\Ext^{-i}_R(M,M)\rightarrow K^{i+s}_{{\bf R}\Hom_R(\Ho_s(M), M)}$ is bijective for all  $i\geq -k+2$, and injective for $i=-k+1$.
\end{itemize}
\end{Corollary}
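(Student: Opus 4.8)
The plan is to deduce Corollary~\ref{endotheorem2} from Theorem~\ref{maintheorem} applied to the complex $M^\dagger$ in place of $M$, with the test complex $N=M^\dagger$ as well, and then to rewrite both the source and the target of the natural map using dagger duality and the identification $K_{M^\dagger}\cong\Ho_s(M)$. The starting point is that $M^\dagger$ is assumed equidimensional, so Theorem~\ref{maintheorem} with $N=M^\dagger$ is applicable: $M^\dagger$ satisfies $(S_k)=(S_{k,M^\dagger})$ if and only if the natural homomorphism
$$\Ext^{-i}_R(M^\dagger,M^\dagger)\rightarrow K^{i+\dim_R M^\dagger}_{M^\dagger\otimes^L_R K_{M^\dagger}}$$
is bijective for $i\geq -k+2$ and injective for $i=-k+1$. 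Now $\dim_R M^\dagger=\sup M=s$ by formula~$(\ref{sdagger})$ (using biduality, $\sup M=\dim_R M^{\dagger}$), and $K_{M^\dagger}=K^{\dim_R M^\dagger}_{M^\dagger}=\Ho_s(M^{\dagger\dagger})\cong\Ho_s(M)$ by biduality, so the target is $K^{i+s}_{M^\dagger\otimes^L_R\Ho_s(M)}$.

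First I would dispose of the source: by Lemma~\ref{Extlemma} we have $\Ext^{-i}_R(M^\dagger,M^\dagger)\cong\Ext^{-i}_R(M,M)$ for all $i$, which is exactly the $\Ext$ group appearing in b). Next I would rewrite the target. The point is that
$$M^\dagger\otimes^L_R\Ho_s(M)\,\simeq\,\Ho_s(M)\otimes^L_R M^\dagger$$
and, taking the dagger dual and using adjointness (`swap', \cite[A.4.22]{C.L.N}), $\bigl(\Ho_s(M)\otimes^L_R M^\dagger\bigr)^\dagger\simeq{\bf R}\Hom_R(\Ho_s(M),M^{\dagger\dagger})\simeq{\bf R}\Hom_R(\Ho_s(M),M)$ by biduality. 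Since $K^j_X=\Ho_j(X^\dagger)$ by Definition~\ref{kanonicalforcomplex}, this gives $K^{i+s}_{M^\dagger\otimes^L_R\Ho_s(M)}=\Ho_{i+s}\bigl((M^\dagger\otimes^L_R\Ho_s(M))^\dagger\bigr)\cong\Ho_{i+s}({\bf R}\Hom_R(\Ho_s(M),M))=K^{i+s}_{{\bf R}\Hom_R(\Ho_s(M),M)}$, which is precisely the target in b). One should of course remark that these identifications are compatible with the natural homomorphism of Theorem~\ref{maintheorem}, so that the map in b) is indeed (up to the isomorphisms above) the natural one; this is a naturality check on the adjunction and biduality isomorphisms and is routine.

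The one genuinely substantive point — and the place I would be most careful — is verifying that $K_{M^\dagger}\cong\Ho_s(M)$ really is $K^{\dim_R M^\dagger}_{M^\dagger}$ with the correct index, i.e.\ that $\dim_R M^\dagger=s$ and not some other value; this is where the normalization of the dualizing complex and formulas~$(\ref{sdagger})$, $(\ref{idagger})$ together with biduality enter, and it has in fact already been used in the excerpt (e.g.\ in the proof of Corollary~\ref{H_SK_M} and in Lemma~\ref{assK_M} c)). Beyond that, the proof is a short chain of formal manipulations: apply Theorem~\ref{maintheorem} to $(M^\dagger,M^\dagger)$, replace the source via Lemma~\ref{Extlemma}, and replace the target via `swap' and biduality. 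No new estimate is needed, so I do not anticipate a real obstacle; the main risk is merely bookkeeping with suspensions and indices, which the formula $K^j_X=\Ho_j(X^\dagger)$ keeps under control.
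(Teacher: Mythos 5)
Your overall strategy is exactly the paper's: apply Theorem~\ref{maintheorem} to $M^\dagger$ with $N=M^\dagger$, note $\dim_R M^\dagger=s$ and $K_{M^\dagger}\cong\Ho_s(M)$ by biduality and $(\ref{sdagger})$, pass from $\Ext^{-i}_R(M^\dagger,M^\dagger)$ to $\Ext^{-i}_R(M,M)$ via Lemma~\ref{Extlemma}, and identify $(M^\dagger\otimes^L_R K_{M^\dagger})^\dagger\simeq{\bf R}\Hom_R(K_{M^\dagger},M^{\dagger\dagger})\simeq{\bf R}\Hom_R(\Ho_s(M),M)$ by Hom--tensor adjointness and biduality (your citation of ``swap'' is a mislabel; the identity actually used is adjointness, as in the paper's proof, but the computation is right).

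However, the final equality in your chain,
\[
\Ho_{i+s}\bigl({\bf R}\Hom_R(\Ho_s(M),M)\bigr)=K^{i+s}_{{\bf R}\Hom_R(\Ho_s(M),M)},
\]
is not correct. By Definition~\ref{kanonicalforcomplex}, $K^j_X=\Ho_j(X^\dagger)$, so $K^{i+s}_{{\bf R}\Hom_R(\Ho_s(M),M)}=\Ho_{i+s}\bigl(({\bf R}\Hom_R(\Ho_s(M),M))^\dagger\bigr)\cong\Ho_{i+s}(M^\dagger\otimes^L_R\Ho_s(M))$, which is a derived tensor homology, not the $\Ext$ you computed. What Theorem~\ref{maintheorem} actually produces for the target is
\[
K^{i+s}_{M^\dagger\otimes^L_R K_{M^\dagger}}=\Ho_{i+s}\bigl((M^\dagger\otimes^L_R K_{M^\dagger})^\dagger\bigr)\cong\Ho_{i+s}\bigl({\bf R}\Hom_R(\Ho_s(M),M)\bigr)=\Ext^{-(i+s)}_R(\Ho_s(M),M),
\]
and this is the object that should appear in part b); the symbol $K^{i+s}_{{\bf R}\Hom_R(\Ho_s(M),M)}$ as printed in the corollary does not name it (it has one dagger too many). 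You have in effect inherited a typo from the statement and then ``closed'' the gap with a step that is false by the definition of modules of deficiency. One sanity check: applying Proposition~\ref{tensorcanonical} to $M^\dagger$ gives $K_{M^\dagger\otimes^L_R K_{M^\dagger}}=\Hom_R(\Ho_s(M),\Ho_s(M))$, so at $i=0$ the target must be $\Ho_s({\bf R}\Hom_R(\Ho_s(M),M))=\Hom_R(\Ho_s(M),\Ho_s(M))$, consistent with Corollary~\ref{H_SK_M2''}; your last identification would instead give $\Ho_s(M^\dagger\otimes^L_R\Ho_s(M))$, which is not what that corollary needs. Apart from this bookkeeping slip at the end, the derivation matches the paper.
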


\begin{proof} Note that $\dim_R M^\dagger = s$ by formula~$(\ref{sdagger})$. By dagger duality $K_{M^\dagger}=\Ho_s(M)$. By adjointness and biduality
we then get
$$(M^\dagger \otimes_R^L K_{M^\dagger})^\dagger \simeq {\bf R}\Hom_R(K_{M^\dagger},M^{\dagger\dagger}) \simeq {\bf R}\Hom_R(\Ho_s(M), M).$$
The claim is then a direct consequence of
Lemma~\ref{Extlemma} and Theorem~\ref{maintheorem}.
\end{proof}

In a similar way, Corollary~\ref{schenzeltheorem} yields

\begin{Corollary}
\label{schenzeltheorem2}Let $(R,m)$ be a local ring admitting a dualizing complex, $k,l\in \mathbb Z$ and let $M\in D^f_b(R)$. Set $s=\sup M$. If $M^\dagger$ is equidimensional, then the following 
conditions are equivalent:
\begin {itemize}
\item[a)] $M^{\dagger}$ satisfies Serre's condition $(S_{k,l})$;
\item[b)]The natural homomorphism $K^i_M\rightarrow K^{i+s}_{\Ho_s(M)}$ is bijective for $i\geq l-k+2$, and injective for $i=l-k+1$;
\item[c)]The natural homomorphism $\Ho_m^{i+s}(\Ho_s(M))\rightarrow\Ho^i_m(M)$ is bijective for $i\geq l-k+2$, and surjective for $i=l-k+1$.
\end{itemize}
\end{Corollary}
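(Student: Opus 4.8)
The plan is to mimic the structure of Corollary~\ref{schenzeltheorem}, replacing the role of $K_M$ by $\Ho_s(M)=K_{M^\dagger}$ and the role of $M$ by $M^\dagger$. Concretely, I would apply Corollary~\ref{schenzeltheorem} directly to the complex $M^\dagger$ in place of $M$. Since $M^\dagger$ is assumed equidimensional and $\dim_R M^\dagger = s$ by formula~$(\ref{sdagger})$, that corollary (with $t$ there equal to $s$) gives that $M^\dagger$ satisfies $(S_{k,l})$ if and only if the natural homomorphism $\Ho_i(M^\dagger)\rightarrow K^{i+s}_{K_{M^\dagger}}$ is bijective for $i\geq l-k+2$ and injective for $i=l-k+1$. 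Now $\Ho_i(M^\dagger)=K^i_M$ by Definition~\ref{kanonicalforcomplex}, and $K_{M^\dagger}=\Ho_s(M)$ by dagger duality (as already used in Lemma~\ref{assK_M} c) and Corollary~\ref{endotheorem2}). Substituting these identifications turns the displayed map into $K^i_M\rightarrow K^{i+s}_{\Ho_s(M)}$, which is exactly statement b). This establishes the equivalence of a) and b).

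For the equivalence of b) and c), the plan is to invoke local duality. By the Remark following Definition~\ref{kanonicalforcomplex}, applying the Matlis duality functor $\Hom_R(-,E_R(k))$ to the module $K^j_N$ yields $\Ho^j_m(N)$ for any $N\in D^f_b(R)$ and any $j$; in particular $\Hom_R(K^{i+s}_{\Ho_s(M)},E_R(k))\cong \Ho^{i+s}_m(\Ho_s(M))$ and $\Hom_R(K^i_M,E_R(k))\cong \Ho^i_m(M)$. Since $E_R(k)$ is injective, Matlis duality is exact and faithful, so it sends the natural homomorphism of b) to a natural homomorphism in the opposite direction, $\Ho^{i+s}_m(\Ho_s(M))\rightarrow \Ho^i_m(M)$, turning bijectivity into bijectivity and injectivity into surjectivity. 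This is precisely the content of c). I would note, as in the proof of Corollary~\ref{schenzeltheorem} c), that one should check the map in c) is indeed the Matlis dual of the natural map in b) — this is a standard naturality verification coming from how both maps are induced (the map $M^\dagger\to K_{M^\dagger}^\dagger[\,\cdot\,]$ dualized via $(\ref{LDT})$), and it is the kind of routine compatibility check I would state but not belabor.

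I anticipate essentially no serious obstacle here, since the statement is a formal consequence of Corollary~\ref{schenzeltheorem} under the substitution $M\rightsquigarrow M^\dagger$ together with the biduality $M^{\dagger\dagger}\simeq M$; the only point requiring a sentence of care is verifying $\dim_R M^\dagger = s$ (immediate from $(\ref{sdagger})$) so that the exponent shift in Corollary~\ref{schenzeltheorem} reads "$+s$" and the truncation/index bookkeeping matches. The remaining work — identifying $\Ho_i(M^\dagger)$ with $K^i_M$, identifying $K_{M^\dagger}$ with $\Ho_s(M)$, and checking that the natural map produced by Corollary~\ref{schenzeltheorem} coincides with the natural map named in b) — is bookkeeping that parallels what was already done in Corollary~\ref{endotheorem2}. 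So the proof will be short: apply Corollary~\ref{schenzeltheorem} to $M^\dagger$, translate via dagger duality, and obtain c) by applying the exact functor $\Hom_R(-,E_R(k))$ and invoking local duality.
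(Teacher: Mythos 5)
Your proof is correct and follows exactly the route the paper intends: the paper gives no explicit proof, only the phrase ``In a similar way, Corollary~\ref{schenzeltheorem} yields,'' which is precisely the substitution $M\rightsquigarrow M^\dagger$ together with the identifications $\dim_R M^\dagger=s$, $\Ho_i(M^\dagger)=K^i_M$, $K_{M^\dagger}=\Ho_s(M)$, and Matlis/local duality for the step from b) to c) that you carry out.
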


Corollary~\ref{H_SK_M} and Corollary~\ref{H_SK_M2'} have now the following analogues:

\begin{Corollary}\label{H_SK_M2}Let $(R,m)$ be a local ring admitting a dualizing complex and let $M\in D^f_b(R)$.  Suppose that $M^\dagger$ is equidimensional. Set $t=\dim_RM$ and $s=\sup M$. 
If $M^\dagger$ satisfies Serre's condition $(S_1)$, then 
\begin{itemize}
\item[a)]$\dim_R\Ho_s(M)=\dim_RK_M=s+t$;
\item[b)]$\Ass_RK_M=\Assh_RK_M$.
\end{itemize}
\end{Corollary}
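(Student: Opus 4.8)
The plan is to mirror the proof of Corollary~\ref{H_SK_M} but with the roles of $M$ and $M^\dagger$ interchanged, using the dictionary supplied by dagger duality and the earlier results on the dagger dual. First I would recall that $\dim_R M^\dagger = s$ by formula~$(\ref{sdagger})$ and that $K_{M^\dagger} = \Ho_s(M)$ by dagger duality, so the canonical module of $M^\dagger$ is precisely the top homology of $M$. Since $M^\dagger$ is assumed equidimensional and to satisfy $(S_1)$, the key input for part a) should be Corollary~\ref{schenzeltheorem2}: observe that $(S_1)$ implies $(S_{1,t})$, because $\inf M_p^\dagger \le \sup M_p^\dagger = \dim_{R_p}M_p \le t$ for all $p \in \Supp M$, hence Remark~\ref{supremark} applied to $M^\dagger$ (whose supremum is $t$) gives $(S_{1,t})$. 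Then Corollary~\ref{schenzeltheorem2} b) with $k=1$, $l=t$ tells us that the natural homomorphism $K^i_M \to K^{i+s}_{\Ho_s(M)}$ is injective for $i = t-1+1 = t$, i.e.\ $K_M = K^t_M \to K^{t+s}_{\Ho_s(M)}$ is injective.

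Once we know $K_M \hookrightarrow K^{s+t}_{\Ho_s(M)}$, the target must be nonzero, so $\dim_R \Ho_s(M) = s+t$ by Lemma~\ref{assK_M} a) applied to $\Ho_s(M)$ (which gives $\dim_R K^{s+t}_{\Ho_s(M)} \le s+t$, with equality forced once it is nonzero). Then Corollary~\ref{w_0}, which says $\dim_R K_M = s+t$ iff $\dim_R \Ho_s(M) = s+t$, yields $\dim_R K_M = s+t$ as well. This completes a).

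For part b) I would argue as in Corollary~\ref{H_SK_M} b). Since $\dim_R K_M = s+t$ by a), it suffices to show $\Ass_R K_M = (\Ass_R K_M)_{s+t}$. The injection $K_M \hookrightarrow K^{s+t}_{\Ho_s(M)}$ together with the degree bound $\dim_R K^{s+t}_{\Ho_s(M)} \le s+t$ means that $K^{s+t}_{\Ho_s(M)}$, and hence its submodule $K_M$, has dimension $\le s+t$, so every associated prime $p$ of $K_M$ satisfies $\dim R/p \le s+t$. To push this to equality I would identify $\Ass_R K_M$ via the bijection-on-top part of Corollary~\ref{schenzeltheorem2} b) or, more directly, use that $K^{s+t}_{\Ho_s(M)} = K_{\Ho_s(M)}$ (since $\dim_R \Ho_s(M) = s+t$), invoke~\cite[Proposition 2.3 b)]{Schbi} to get $\Ass_R K_{\Ho_s(M)} = (\Ass_R \Ho_s(M))_{s+t}$, and then combine with Lemma~\ref{assK_M} c), namely $(\Ass_R K_M)_{s+t} = (\Ass_R \Ho_{s}(M))_{s+t}$ — note here $i - \dim_R M = (s+t) - t = s$, so the indices match. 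Chasing these identities forces $\Ass_R K_M = (\Ass_R K_M)_{s+t} = \Assh_R K_M$.

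The main obstacle, I expect, is purely bookkeeping: getting the numerology of Corollary~\ref{schenzeltheorem2} exactly right — the target $K^{i+s}_{\Ho_s(M)}$ with the shift by $s = \sup M$, the threshold $i = l-k+1 = t$, and checking that $(S_1)$ for $M^\dagger$ really does imply $(S_{1,t})$ for $M^\dagger$ via the correct supremum $\sup M^\dagger = t$ rather than $s$. There is no deep difficulty beyond carefully tracking which complex plays which role and making sure Lemma~\ref{assK_M} is applied to $\Ho_s(M)$ with its own dimension $s+t$ substituted for $\dim_R M$.
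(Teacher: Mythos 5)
Your proof is correct and takes exactly the approach the paper intends: the paper gives no explicit proof of Corollary~\ref{H_SK_M2}, simply stating it as the analogue of Corollary~\ref{H_SK_M} obtained via the dagger-duality dictionary ($\sup M^\dagger = t$, $K_{M^\dagger}=\Ho_s(M)$, Corollary~\ref{schenzeltheorem2}), which is precisely what you carry out. One small slip worth flagging: the claim $\sup (M^\dagger)_p = \dim_{R_p}M_p$ is false (by formula~$(\ref{daggerloc})$ one has $\sup (M^\dagger)_p = \dim R/p + \dim_{R_p}M_p$), but this side remark is superfluous since Remark~\ref{supremark} with $\sup M^\dagger = t$ already gives $(S_1)\Rightarrow(S_{1,t})$ directly; similarly, the upper bound $\dim_R\Ho_s(M)\le s+t$ should be attributed to Lemma~\ref{assK_M}~a) applied to $M^\dagger$ (not to $\Ho_s(M)$), while the lower bound comes from $K^{s+t}_{\Ho_s(M)}\ne 0$ via $\dim_R N = \sup\{i : K^i_N\ne 0\}$ — neither slip affects the conclusion.
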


\begin{Corollary}\label{H_SK_M2''}Let $(R,m)$ be a local ring admitting a dualizing complex, and let $M\in D^f_b(R)$.  Suppose that $M^\dagger$ is equidimensional. Set $s=\sup M$. If $M^\dagger$ satisfies 
Serre's condition $(S_2)$, then $$\Hom_{D(R)}(M,M) \cong \Hom_R(\Ho_s(M),\Ho_s(M))$$ and $K_M\cong K_{\Ho_s(M)}$. Moreover, if $\Ho_s(M)$ is equidimensional and satisfies Serre's condition $(S_2)$, then $\Ho_s(M)\cong K_{K_M}$. 
\end{Corollary}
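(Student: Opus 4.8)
The plan is to mirror the proof of Corollary~\ref{H_SK_M2'}, replacing $M$ by $M^\dagger$ throughout and using the translation dictionary already established in this section. First I would recall that $\dim_R M^\dagger = s$ by formula~$(\ref{sdagger})$ and that $K_{M^\dagger}=\Ho_s(M)$ by dagger duality. Applying Corollary~\ref{H_SK_M2'} (or rather its proof scheme) to the complex $M^\dagger$ in place of $M$: since $M^\dagger$ is equidimensional and satisfies $(S_2)$, Corollary~\ref{endotheorem2} with $k=2$ gives that the natural map $\Ext^0_R(M,M)\to K^s_{{\bf R}\Hom_R(\Ho_s(M),M)}$ is bijective. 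The target here is the degree-$s$ deficiency module of ${\bf R}\Hom_R(\Ho_s(M),M) \simeq M^\dagger\otimes^L_R K_{M^\dagger}$, and by the analogue of Proposition~\ref{tensorcanonical} applied to $M^\dagger$ this module is $\Hom_R(K_{M^\dagger},K_{M^\dagger}) = \Hom_R(\Ho_s(M),\Ho_s(M))$. Since $\Hom_{D(R)}(M,M)\cong \Ext^0_R(M,M)$, this yields the first isomorphism $\Hom_{D(R)}(M,M)\cong \Hom_R(\Ho_s(M),\Ho_s(M))$.

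Next I would produce the isomorphism $K_M\cong K_{\Ho_s(M)}$. From Corollary~\ref{schenzeltheorem2} applied with $k=2$ and $l=0$ (note that $(S_2)$ implies $(S_{2,0})$ here since $M^\dagger$ is being tested and one checks the relevant bound, or more directly one invokes the $(S_{k,l})$ form with the appropriate $l$): the natural homomorphism $K^i_M\to K^{i+s}_{\Ho_s(M)}$ is bijective for $i\ge l-k+2$. Taking $i = \dim_R M = t$, which lies in the bijective range, gives $K_M = K^t_M \cong K^{t+s}_{\Ho_s(M)}$. Finally, by Corollary~\ref{H_SK_M2} a) we have $\dim_R\Ho_s(M) = s+t$, so $K^{t+s}_{\Ho_s(M)} = K_{\Ho_s(M)}$ is precisely the canonical module. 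This gives $K_M\cong K_{\Ho_s(M)}$.

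For the last assertion, suppose additionally that $\Ho_s(M)$ is equidimensional and satisfies $(S_2)$. Apply the already-proved isomorphism $K_N\cong K_{\Ho_{\sup N}(N)}$ — or rather the module-level statement — to the module $N = \Ho_s(M)$ itself, viewed as a complex concentrated in degree $0$; but since $N$ is a module, $\Ho_{\sup N}(N) = N$, so this is vacuous. Instead, the point is to use the first part of Corollary~\ref{H_SK_M2'}: since $\Ho_s(M)$ is equidimensional and satisfies $(S_2)$, that corollary gives $\Ho_s(M)\cong K_{K_{\Ho_s(M)}}$. Combining with $K_{\Ho_s(M)}\cong K_M$ just established yields $\Ho_s(M)\cong K_{K_M}$, as claimed.

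The main obstacle I anticipate is bookkeeping with the grading shifts and the precise ranges in Corollary~\ref{schenzeltheorem2}: one must verify that the index $i=t$ falls into the bijective (not merely injective) range, i.e.\ that $t\ge l-k+2$ for the chosen $l$, and pin down which $l$ is legitimate given that $(S_2)$ for $M^\dagger$ implies $(S_{2,\sup M^\dagger})=(S_{2,s})$ by Remark~\ref{supremark}. With $l=s$ one needs $t\ge s-2+2 = s$, which need not hold in general; so the argument must instead extract the sharper statement directly from Corollary~\ref{endotheorem2} / Proposition~\ref{tensorcanonical} at the level of the top deficiency module rather than routing through $(S_{k,l})$. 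Handled correctly — as in the proof of Corollary~\ref{H_SK_M2'} — this is the delicate point, but it is purely a matter of tracking the isomorphism $K_{M^\dagger\otimes^L_R K_{M^\dagger}} = \Hom_R(\Ho_s(M),\Ho_s(M))$ and reading off $K_M$ from it via the triangle~$(\ref{rt})$ for $M^\dagger$.
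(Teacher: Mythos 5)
Your proposal is substantially correct and mirrors exactly the strategy the paper intends: since the paper states Corollaries~\ref{H_SK_M2} and~\ref{H_SK_M2''} only as ``analogues'' of Corollaries~\ref{H_SK_M} and~\ref{H_SK_M2'} and gives no separate proof, the expected argument is to apply the $M$-case to $M^\dagger$ and translate back via dagger duality (using Lemma~\ref{Extlemma}, $K_{M^\dagger}=\Ho_s(M)$, and Proposition~\ref{tensorcanonical} for $M^\dagger$), which is precisely what you do for all three assertions.

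The one genuine slip is in your final paragraph: you write that $(S_2)$ for $M^\dagger$ yields $(S_{2,\sup M^\dagger})=(S_{2,s})$, but in fact by formula~$(\ref{sdagger})$ one has $\sup M^\dagger=\dim_R M=t$, not $s$ (it is $\dim M^\dagger$ that equals $s$). So the legitimate value of $l$ in Corollary~\ref{schenzeltheorem2} is $l=t$, and the bijectivity range becomes $i\ge l-k+2=t$, which is satisfied at $i=t$. Hence $K_M=K^t_M\cong K^{t+s}_{\Ho_s(M)}=K_{\Ho_s(M)}$ (the last identification using $\dim_R\Ho_s(M)=s+t$ from Corollary~\ref{H_SK_M2}~a)) goes through cleanly by this route; the ``obstacle'' you flag and the proposed workaround through the triangle~$(\ref{rt})$ are unnecessary. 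With that correction, your sketch of all three parts — the $\Ext^0$ identification, the isomorphism $K_M\cong K_{\Ho_s(M)}$, and the final application of Corollary~\ref{H_SK_M2'} to the module $\Ho_s(M)$ — is exactly the intended proof.
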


Let $R$ be a ring. Recall that a filtration of $\Spec R$ is a descending sequence $$\mathcal{F}\colon\quad \ldots \supseteq F^{i-1}\supseteq F^i\supseteq F^{i+1} \supseteq\ldots $$ of subsets of $\Spec R$ such that $\bigcap_i F^i=\emptyset$, $F^i=\Spec R$ for some $i\in \mathbb Z$ and each $p\in F^i \setminus  F^{i+1}$ is a minimal element of $F^i$ with respect to inclusion. Let $E_\mathcal{F}(M)$ denote the Cousin complex corresponding to a complex $M\in D^f_b(R)$. Recall that $E_\mathcal{F}(M)$ is a complex $$\ldots \rightarrow E_\mathcal{F}(M)_i \rightarrow E_\mathcal{F}(M)_{i-1}\rightarrow\ldots $$ with $$E_\mathcal{F}(M)_i=\bigoplus_{p\in F^{-i}\setminus F^{-i+1}} \Ho^{-i}_{pR_p}(M_p).$$ Observe that we here grade the Cousin complex homologically in contrary to the general tradition. For more details about Cousin complexes
we refer to~\cite[Chapter IV, \S 3]{lnsfunctorial}. Note that if $M$ is an $R$-module, then the Cousin complex studied by Sharp (see~\cite{sharptang}, for example) is a complex 
$$0 \rightarrow M \rightarrow E_\mathcal{F}(M)_0 \rightarrow  E_\mathcal{F}(M)_{-1} \rightarrow \ldots$$
 A standard example of a filtration is the ``$M$-dimension filtration'' ${\mathcal D}(M)$ defined by the formula  $${\mathcal D}^i(M)=\left\{p\in \Spec R\mid i\leq \dim M-\dim R\slash p\right\}$$ for all $i\in\mathbb{Z}$.

The following result is proved by Lipman,  Nayak and Sastry in~\cite[Proposition 9.3.5]{lnsfunctorial}. Similar results have been proved in the module case by Dibaei and Tousi in~\cite[Theorem 1.4]{tusidib} 
and by Kawasaki in~\cite[Theorem 5.4]{Kawasaki}. 
 \begin{Proposition}\label{lipman2} Let $(R,m)$ be a local ring admitting a dualizing complex, and let $M\in D^{f}_b(R)$. Then 
 $$E_{{\mathcal D}(M)}(M)\cong\textstyle\sum^{-\dim_RM}K_M^\dagger.$$
\end{Proposition}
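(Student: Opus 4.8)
The plan is to reduce the statement to a known description of the Cousin complex of a complex and then to compare, spot by spot, the Cousin complex of $M$ with respect to the dimension filtration ${\mathcal D}(M)$ with the graded shift $\textstyle\sum^{-\dim_RM}K_M^\dagger$. Write $t=\dim_RM$. First I would recall that, by the very definition given above, the $i$-th term of $E_{{\mathcal D}(M)}(M)$ is $\bigoplus_{p\in {\mathcal D}^{-i}(M)\setminus {\mathcal D}^{-i+1}(M)}\Ho^{-i}_{pR_p}(M_p)$, and that $p\in {\mathcal D}^{-i}(M)\setminus {\mathcal D}^{-i+1}(M)$ means exactly $\dim R/p = t+i$, so that the prime ideals contributing in homological degree $i$ are those in $(\Supp_RM)_{t+i}$. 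For such a $p$, local duality over $R_p$ together with formula $(\ref{daggerloc})$ identifies $\Ho^{-i}_{pR_p}(M_p)$ with the Matlis dual over $R_p$ of $\Ho_{-i}((M_p)^{\dagger p})\cong\Ho_{-i+\dim R/p}((M^\dagger)_p) = (K^{t}_M)_p = (K_M)_p$. So each contributing summand is, up to Matlis duality over $R_p$, the localization of the canonical module $K_M$ at a prime of coheight $t+i$.

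Next I would bring in formula $(\ref{sdagger})$, which gives $\sup M^\dagger = t$, hence $M^\dagger_{\subset t} \simeq M^\dagger$ has top homology $K_M$ sitting in degree $t$; thus $\textstyle\sum^{-t}K_M^\dagger$ is the dagger dual of the module $K_M$ placed in homological degree $-t$. The key point is then that the construction of the Cousin complex is functorial and, crucially, that it only depends on the top homology of the dagger dual. I would invoke here exactly the observation attributed to Lipman, Nayak and Sastry: the Cousin complex (with respect to ${\mathcal D}(M)$, which is determined by $t=\dim_RM=\sup M^\dagger$) of $M$ agrees with the Cousin complex of a complex whose dagger dual is concentrated in top degree, i.e.\ of $\textstyle\sum^{-t}K_M^\dagger$. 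For a module $K$ placed in degree $0$, the classical theory of Sharp identifies the Cousin complex with respect to its dimension filtration with $E_{{\mathcal D}(K)}(K)$, whose terms are the $\Ho^{-i}_{pR_p}(K_p)$ over $p$ of coheight $\dim_RK$; applying the shift $\textstyle\sum^{-t}(-)^\dagger$ and matching dimensions via Lemma~\ref{assK_M} a) and Proposition~\ref{suppK_M}, these are precisely the terms computed in the first paragraph.

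Concretely, the steps in order are: (1) unwind the definition of $E_{{\mathcal D}(M)}(M)_i$ and identify the contributing primes as $(\Supp_RM)_{t+i}$; (2) for each such $p$, use local duality over $R_p$ and $(\ref{daggerloc})$ to rewrite $\Ho^{-i}_{pR_p}(M_p)$ as the $R_p$-Matlis dual of $(K_M)_p$, and note that $(K_M)_p\ne 0$ exactly when $p\in\Supp_R K_M$, which by Proposition~\ref{suppK_M} b) and the coheight condition is automatic; (3) likewise unwind $E_{{\mathcal D}(\textstyle\sum^{-t}K_M^\dagger)}$ — or directly the Cousin complex of the module $K_M$ shifted — to see the same summands appear in the same degrees; (4) check that the differentials coincide, which follows because both Cousin complexes are obtained by the same functorial local-cohomology construction applied to objects with the same top homology $K_M$ in the relevant degree, the point exploited in~\cite[Proposition 9.3.5]{lnsfunctorial}. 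The main obstacle is step (4): matching the \emph{differentials}, not just the terms. Term-by-term agreement is a straightforward computation with local duality, but to get an honest isomorphism of complexes one must either reproduce the functoriality argument of Lipman--Nayak--Sastry showing that $E_{\mathcal F}(-)$ factors through ``top homology of $(-)^\dagger$'', or else invoke their result essentially verbatim; I would take the latter route, citing~\cite[Proposition 9.3.5]{lnsfunctorial} for the claim that $E_{{\mathcal D}(M)}(M)$ depends only on $K_M$ and is computed by the indicated shifted dagger dual, and then spend the bulk of the written proof on identifying that the normalization conventions (homological grading, the shift by $t=\dim_RM$, and the normalization $\sup D_R=\dim R$) line up so that the abstract statement becomes the displayed formula.
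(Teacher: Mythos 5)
The paper offers no proof at all: Proposition~\ref{lipman2} is simply attributed to~\cite[Proposition 9.3.5]{lnsfunctorial}, and your proposal ultimately does the same, deferring the identification of the differentials to that reference, while the preliminary term-by-term check (via local duality and the residual-complex representative of $D_R$) is a sound sanity check of the grading and normalization conventions. Two small slips worth noting: $\sum^{-t}K_M^\dagger$ equals $(\sum^{t}K_M)^\dagger$, i.e.\ the dagger dual of $K_M$ placed in degree $t$, not $-t$; and it is not automatic that $(K_M)_p\neq0$ for every $p\in\Supp_RM$ of coheight $t+i$ (Proposition~\ref{suppK_M}~b) requires $\dim_RM=\dim_{R_p}M_p+\dim R/p$, which inequality~(\ref{dimineq}) does not force to be an equality), but this is harmless since for such $p$ the Cousin summand $\Ho^{-i}_{pR_p}(M_p)$ vanishes as well.
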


We can now use this to prove

\begin{Proposition}\label{MK_ME2}Let $(R,m)$ be a local ring admitting a dualizing complex, and let $M\in D^f_b(R)$. 
Set $s=\sup M$. If $M^{\dagger}$ is equidimensional and satisfies Serre's condition $(S_2)$, then $$E_{{\mathcal D}(M)}(M)\cong \textstyle\sum^{s}E_{{\mathcal D}(\Ho_s(M))}(\Ho_s(M)).$$ 
\end{Proposition}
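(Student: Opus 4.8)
The plan is to combine Proposition~\ref{lipman2} with the isomorphism of canonical modules supplied by Corollary~\ref{H_SK_M2''}. Applying Proposition~\ref{lipman2} to $M$ itself gives
$$E_{{\mathcal D}(M)}(M)\cong \textstyle\sum^{-\dim_RM}K_M^\dagger,$$
while applying it to the module $\Ho_s(M)$ gives
$$E_{{\mathcal D}(\Ho_s(M))}(\Ho_s(M))\cong \textstyle\sum^{-\dim_R\Ho_s(M)}K_{\Ho_s(M)}^{\dagger}.$$
So the statement will follow once we identify the two right-hand sides up to the stated shift. Set $t=\dim_RM$. Since $M^\dagger$ is equidimensional and satisfies $(S_2)$, Corollary~\ref{H_SK_M2} a) gives $\dim_R\Ho_s(M)=s+t$, and Corollary~\ref{H_SK_M2''} gives $K_M\cong K_{\Ho_s(M)}$. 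Taking dagger duals of this last isomorphism yields $K_M^\dagger\simeq K_{\Ho_s(M)}^{\dagger}$ in $D^f_b(R)$.

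With these ingredients in hand, the remaining task is purely bookkeeping of the suspension shifts. First I would write
$$E_{{\mathcal D}(M)}(M)\cong \textstyle\sum^{-t}K_M^\dagger\simeq \textstyle\sum^{-t}K_{\Ho_s(M)}^{\dagger}
= \textstyle\sum^{s}\bigl(\textstyle\sum^{-s-t}K_{\Ho_s(M)}^{\dagger}\bigr)
= \textstyle\sum^{s}\bigl(\textstyle\sum^{-\dim_R\Ho_s(M)}K_{\Ho_s(M)}^{\dagger}\bigr),$$
using $\dim_R\Ho_s(M)=s+t$ in the last equality. By Proposition~\ref{lipman2} applied to the module $\Ho_s(M)$, the bracketed complex is isomorphic to $E_{{\mathcal D}(\Ho_s(M))}(\Ho_s(M))$, and hence
$$E_{{\mathcal D}(M)}(M)\cong \textstyle\sum^{s}E_{{\mathcal D}(\Ho_s(M))}(\Ho_s(M)),$$
as claimed.

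The one point that needs a word of care — and the only place where anything could go wrong — is the compatibility of the dagger dual with the suspension functor: one must be sure that $(\textstyle\sum^n X)^\dagger\simeq \textstyle\sum^{-n}X^\dagger$, so that dagger-dualizing $K_M\cong K_{\Ho_s(M)}$ (an isomorphism of honest modules, placed in homological degree $0$) really does give $K_M^\dagger\simeq K_{\Ho_s(M)}^{\dagger}$ with no hidden shift; this is immediate from the definition of $(-)^\dagger$ and the behaviour of ${\bf R}\Hom_R(-,D_R)$ under $\Sigma$. I expect the main conceptual obstacle to already have been overcome in the earlier corollaries (namely establishing $K_M\cong K_{\Ho_s(M)}$ and the dimension equality), so that this proof is genuinely a short assembly of Proposition~\ref{lipman2}, Corollary~\ref{H_SK_M2}, and Corollary~\ref{H_SK_M2''}.
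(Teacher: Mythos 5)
Your argument is correct and is essentially the paper's own proof: both reduce the statement to Proposition~\ref{lipman2} and then use Corollary~\ref{H_SK_M2} for $\dim_R\Ho_s(M)=s+t$ and Corollary~\ref{H_SK_M2''} for $K_M\cong K_{\Ho_s(M)}$, with the remaining work being the shift bookkeeping you carry out. The only cosmetic difference is that the paper writes the chain of isomorphisms in a single display rather than spelling out the suspension compatibility of $(-)^\dagger$, which you (correctly) flag as the one point worth a remark.
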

\begin{proof}Set $t=\dim_RM$. Note that $\dim_R\Ho_s(M)=s+t$ by  Corollary~\ref{H_SK_M2}. Since $K_{\Ho_s(M)}\cong K_M$ by Corollary~\ref{H_SK_M2''}, Proposition~\ref{lipman2} gives
$$E_{{\mathcal D}(\Ho_s(M))}(\Ho_s(M))\cong\textstyle\sum^{-s-t} K_{\Ho_s(M)}^\dagger \cong\textstyle\sum^{-s-t} K_{M}^\dagger \cong\textstyle\sum^{-s}E_{{\mathcal D}(M)}(M).$$
\end{proof}

\bibliographystyle{amsplain}

\bibliography{akhavinhyryJPA}

\end{document}